\documentclass[11pt]{article}
\usepackage[utf8]{inputenc}
\usepackage[T1]{fontenc}
\usepackage{lmodern}
\usepackage{amsmath,amsfonts,amssymb,mathtools}
\usepackage{amsthm}
\usepackage[a4paper, top=1in, bottom=1in, left=1in, right=1in]{geometry}
\usepackage{microtype}
\usepackage{setspace}
\newtheorem{theorem}{Theorem}[section]
\newtheorem{lemma}[theorem]{Lemma}
\newtheorem{proposition}[theorem]{Proposition}
\newtheorem{corollary}[theorem]{Corollary}
\theoremstyle{definition}
\newtheorem{definition}[theorem]{Definition}
\theoremstyle{remark}
\newtheorem{remark}[theorem]{Remark}
\newcommand{\R}{\mathbb{R}}
\newcommand{\C}{\mathbb{C}}
\newcommand{\e}{\mathrm{e}}
\newcommand{\supp}{\operatorname{supp}}
\newcommand{\Nmap}{\mathcal{N}}
\newcommand{\dd}{\,\mathrm{d}}
\newcommand{\wt}{\widetilde}
\newcommand{\ip}[2]{\left\langle #1,#2\right\rangle}
\title{Logarithmic Stability for an Inverse Source Problem in a Coupled Nonlinear Helmholtz System}

\author{Hongcan Ye \and Lei Zhang \and Ting Zhou}
\date{}

\begin{document}

\maketitle

\begin{abstract}
We study an inverse source problem for a two-mode nonlinear Helmholtz
system motivated by second-harmonic generation.  The datum is the full
first Fr\'{e}chet derivative of the nonlinear Dirichlet-to-Neumann map at
one fixed, common, small boundary state, which need not be zero.  For
real-valued small data, a known susceptibility bounded away from zero,
and sources supported in a fixed compact subset of the domain, we prove
uniqueness and a conditional single-logarithmic stability estimate.  The
first boundary variation is the Dirichlet-to-Neumann map of a symmetric
$2\times2$ matrix Schr\"odinger operator.  Absorbing the two known
Helmholtz energies into its matrix potential permits the use of standard
zero-energy complex geometrical optics solutions with a common null
phase geometry.  A bilinear Alessandrini identity then gives a uniform
Fourier estimate for the two entries containing the background fields.
Zero extension below the exponent $3/2$, a low/high frequency splitting,
and interpolation between $H^{-2}$ and the a priori $H^s$ source bound
yield the stated stability modulus.
\end{abstract}

\medskip
\noindent\textbf{Keywords.} Inverse source problem; nonlinear Helmholtz system; second-harmonic generation; linearized Dirichlet-to-Neumann map; complex geometrical optics; logarithmic stability.

\smallskip
\noindent\textbf{2020 Mathematics Subject Classification.} 35R30, 35J57, 35J65, 78A46.

\section{Introduction}

Let $\Omega\subset\R^3$ be a bounded domain with smooth boundary.  We study a coupled nonlinear Helmholtz system motivated by quadratic wave interactions and second-harmonic generation (SHG); see, for example, \cite{Boyd2020,BaoDobson1994}.  The purpose of the paper is to quantify the stability of recovering an internal source from linearized boundary measurements.

As a formal motivation, consider the time-domain equation
\begin{equation} \label{eq:wave_eq}
(\partial_{t}^{2}-\Delta)u(x,t)+a(x)u(x,t)^{2}=\mathcal{F}(x,t), \quad x\in\Omega, \ t>0,
\end{equation}
where $a$ is the susceptibility and $\mathcal{F}$ is an external source.  We take a real time-harmonic forcing at the fundamental frequency,

\begin{equation} \label{eq:source}
\mathcal{F}(x,t)=F(x)e^{-i\omega t}+\overline{F(x)}e^{i\omega t},
\end{equation}
where $F$ is the unknown spatial source.  Retaining only the fundamental and second-harmonic modes gives the ansatz
\begin{equation} \label{eq:ansatz}
u(x,t)=u_{\omega}(x)e^{-i\omega t}+\overline{u_{\omega}(x)}e^{i\omega t}+u_{2\omega}(x)e^{-i2\omega t}+\overline{u_{2\omega}(x)}e^{i2\omega t}.
\end{equation}
To make the truncation precise, the coefficients of the two retained negative-frequency modes in the square of \eqref{eq:ansatz} are
\begin{equation}\label{eq:quadratic_mode_projection}
\big[u^2\big]_{-\omega}=2u_{2\omega}\overline{u_\omega},
\qquad
\big[u^2\big]_{-2\omega}=u_\omega^2.
\end{equation}
The square also contains the modes $0$, $3\omega$, and $4\omega$.  Consequently, the system below is a two-mode Fourier--Galerkin projection, rather than an exact finite-mode solution of \eqref{eq:wave_eq}.  Projecting onto $e^{-i\omega t}$ and $e^{-2i\omega t}$ gives the complex-amplitude equations with the interactions in \eqref{eq:quadratic_mode_projection}.  In the mathematical analysis we restrict to real-valued coefficients, sources, boundary amplitudes, and the corresponding real solution branch.  On this branch $\overline{u_\omega}=u_\omega$, so the conjugation in the first projected equation is redundant.  The model studied below is therefore
\begin{equation} \label{eq:coupled_system}
\begin{cases}
(-\Delta-\omega^{2})u_{\omega}+2a(x)u_{2\omega}u_{\omega}=F, & \text{in } \Omega, \\
(-\Delta-4\omega^{2})u_{2\omega}+a(x)u_{\omega}^{2}=0, & \text{in } \Omega, \\
u_{\omega}=f_{\omega}, \quad u_{2\omega}=f_{2\omega}, & \text{on } \partial\Omega.
\end{cases}
\end{equation}
Here $f_{\omega}$ and $f_{2\omega}$ are prescribed real Dirichlet data.  The real restriction makes the first linearization a symmetric matrix Schr\"odinger system.  Complex-valued CGO solutions will subsequently be used for the complexification of that linear system.

For a linear Helmholtz equation at one fixed frequency, an arbitrary
interior source is not in general determined by one boundary Cauchy
response.  Indeed, adding a compactly supported function $\psi$ to the
field and adding $(-\Delta-\omega^2)\psi$ to the source leaves both
boundary traces unchanged.  Linear inverse-source results therefore use,
for example, multiple frequencies, time-dependent data, or structural
restrictions on the source; see
\cite{IsakovBook,BaoLinTriki2010,BaoLiLinTriki2015,ChengIsakovLu2016}.
The datum considered here is different: changing the source changes the
background about which the complete nonlinear boundary map is
linearized, and hence changes a matrix potential visible to the full
linearized Dirichlet-to-Neumann operator.

For nonlinear systems, higher-order linearization has been effective for
recovering nonlinear coefficients
\cite{Lassas2021,LuSaloXu2022,Feizmohammadi2020}.  In the inverse-source
setting, nonlinear interactions may also break gauge obstructions that
are unavoidable for the corresponding linear equation.  Liimatainen and
Lin established such uniqueness results for semilinear elliptic
equations \cite{Liimatainen2024}, while Liimatainen and Jaiswal recently
treated a quasilinear elliptic model by higher-order linearization and
CGO methods \cite{LiimatainenJaiswal2026}.  Related source recovery for
nonlinear wave equations using Gaussian-beam interactions appears in
\cite{LassasEtAl2025}.

In models related to second-harmonic generation, Assylbekov and Zhou
considered a nonlinear Maxwell boundary problem
\cite{AssylbekovZhou2021}, Ren and Soedjak recovered coefficients in a
coupled semilinear Helmholtz system from internal data
\cite{RenSoedjak2024}, and Cakoni, Hovsepyan, Lassas, and Vogelius studied
non-scattering for a coupled SHG system \cite{CakoniEtAl2025}.  For
matrix Schr\"odinger inverse problems, two-dimensional multi-channel
stability was proved in \cite{Santacesaria2012}, while inverse boundary
problems for more general systems were considered in \cite{Cekic2025};
see \cite{Novikov2011,Isakov2011} for related fixed-energy stability
results.

The distinction of the present result is both the datum and the
quantitative conclusion.  The susceptibility is known, the internal
source is unknown, and the datum is the complete first boundary
derivative at one fixed, source-dependent background.  Neither a family
of base states nor higher Fr\'{e}chet derivatives is used.  From this
single linearized operator we obtain a conditional logarithmic stability
estimate, rather than only a uniqueness statement.

Although the diagonal channels have wave numbers $\omega$ and $2\omega$,
these known constants belong to the zeroth-order matrix potential once
the linearized system is written as $-\Delta I+\mathbb V_F$.  Standard
null CGO phases can therefore be used in both components.  The proof has
three main structural points.  First, the nonlinear map is constructed on
a real small-data branch before its linearization is complexified.
Second, polarization of matrix-valued CGO solutions recovers the two
entries containing the background fields.  Third, the source contains
two derivatives more than the recovered background; the a priori $H^s$
bound compensates for this loss by interpolation.  The resulting modulus
is logarithmic, and no optimality of its exponent is claimed.
\subsection{Main results and structural progression of the proofs}

Fix a frequency $\omega>0$, $0<\gamma<1$, a compact set $K\Subset\Omega$, and a real susceptibility satisfying
\begin{equation}\label{eq:a_assumptions}
a\in C^{2,\gamma}(\overline\Omega;\R),
\qquad 0<a_0\le a(x)\le a_1 \quad (x\in\overline\Omega).
\end{equation}
Assume that $\omega^2$ and $4\omega^2$ are not Dirichlet eigenvalues of
$-\Delta$ in $\Omega$, and denote the corresponding spectral gaps by
\begin{equation}\label{eq:nonresonance_gaps}
d_1=\operatorname{dist}\bigl(\omega^2,\sigma_D(-\Delta)\bigr)>0,
\qquad
d_2=\operatorname{dist}\bigl(4\omega^2,\sigma_D(-\Delta)\bigr)>0.
\end{equation}
Here $\sigma_D(-\Delta)$ denotes the Dirichlet spectrum in $\Omega$.
Fix one real boundary state
$f^0=(f^0_\omega,f^0_{2\omega})\in(C^{2,\gamma}(\partial\Omega;\R))^2$ such that
\begin{equation}\label{eq:base_boundary_smallness}
\|f^0_\omega\|_{C^{2,\gamma}(\partial\Omega)}+
\|f^0_{2\omega}\|_{C^{2,\gamma}(\partial\Omega)}\le\frac{\varepsilon_*}{2},
\end{equation}
where $\varepsilon_*$ is a final small-data threshold satisfying
$0<\varepsilon_*\le\varepsilon_{\mathrm{wp}}/2$; here
$\varepsilon_{\mathrm{wp}}$ is the open well-posedness radius in
Lemma~\ref{lem:well_posedness}.  The value of $\varepsilon_*$ is fixed
after the additional uniform decrease required by the linearized
problem, and is never enlarged later.  Thus every admissible base point
lies strictly inside the domain of the smooth solution map.  For a source $F$, write
$U_F^0=(u^0_{\omega,F},u^0_{2\omega,F})$ for the corresponding small
real solution at boundary value $f^0$.

The nonlinear Dirichlet-to-Neumann map is
\begin{equation}
    \Lambda_{a,F}(f) = \left( \partial_\nu u_\omega|_{\partial\Omega}, \partial_\nu u_{2\omega}|_{\partial\Omega} \right)^T.
\end{equation}
We always linearize at the same boundary state $f^0$.

\begin{definition} \label{def:linearized_DtN}
For $h=(h_\omega,h_{2\omega})\in
(C^{2,\gamma}(\partial\Omega;\R))^2$ and small $t\in\R$, let $U_F(t)$
solve \eqref{eq:coupled_system} with boundary value $f^0+th$.  Set
$V_F=\partial_tU_F(t)|_{t=0}$.  Then
\begin{equation}\label{eq:linearized_system}
\left[
\begin{pmatrix}-\Delta-\omega^2&0\\0&-\Delta-4\omega^2\end{pmatrix}
+\mathbb Q_F(x)
\right]V_F=0,
\qquad
\mathbb Q_F=2a
\begin{pmatrix}u^0_{2\omega,F}&u^0_{\omega,F}\\u^0_{\omega,F}&0\end{pmatrix},
\end{equation}
with $V_F|_{\partial\Omega}=h$.  After decreasing the small-data radius if necessary, this Dirichlet problem is uniquely solvable by a Neumann-series perturbation of the two uncoupled Helmholtz operators.  We define
\begin{equation}\label{eq:linearized_map_definition}
\Nmap_F:=D\Lambda_{a,F}(f^0),
\qquad
\Nmap_Fh=\partial_\nu V_F|_{\partial\Omega}.
\end{equation}
It extends to a bounded real-linear map $(H^{1/2}(\partial\Omega;\R))^2\to(H^{-1/2}(\partial\Omega;\R))^2$.  Its complexification, denoted by the same symbol, is used below.
With the standard Hilbert-space complexification norms, the norm of a real operator and that of its complexification are equal.  Hence the later use of complex CGO boundary traces is an algebraic complexification of the real data and does not require additional complex-valued physical measurements.
\end{definition}

\begin{remark}[meaning of the linearized data]\label{rem:full_operator_data}
The datum at $f^0$ is not one scalar measurement and not the response to one prescribed perturbation.  It is the complete bounded operator $h\mapsto \Nmap_Fh$ at that single state.  Thus the proof may choose boundary directions depending on the CGO parameter.  On the other hand, the base point is fixed and common to the two sources: neither a family of base states nor higher Fr\'{e}chet derivatives is assumed.  This distinction is important when comparing the result with higher-order linearization methods.
\end{remark}

For brevity, $\|T\|_*$ denotes the operator norm of
$T:(H^{1/2}(\partial\Omega))^2\to(H^{-1/2}(\partial\Omega))^2$.

\begin{definition} \label{def:admissible_class}
For $s>0$, $M>0$, and the final small-data threshold
$\varepsilon_*>0$ fixed above, define
\begin{equation}
\mathcal A_{M,s}(K)=\left\{F\in C^\gamma(\overline\Omega;\R)\cap H^s(\R^3):
\supp F\subset K,\ \|F\|_{C^\gamma(\overline\Omega)}\le\frac{\varepsilon_*}{2},\
\|F\|_{H^s(\R^3)}\le M\right\}.
\end{equation}
Here and below $F$ is identified with its zero extension to $\R^3$.
\end{definition}

\begin{theorem} \label{thm:main_theorem}
Let $0<r<3/2$ and $s>0$.  Suppose that \eqref{eq:a_assumptions} holds and that $F_1,F_2\in\mathcal A_{M,s}(K)$.  Let $\Nmap_{F_j}=D\Lambda_{a,F_j}(f^0)$ be taken at the same fixed boundary state.  If
\begin{equation}
\delta:=\|\Nmap_{F_1}-\Nmap_{F_2}\|_{\mathcal L((H^{1/2}(\partial\Omega))^2,(H^{-1/2}(\partial\Omega))^2)}
\end{equation}
satisfies $0<\delta<\delta_0$ for a sufficiently small uniform constant $\delta_0$, then
\begin{equation}
\|F_1-F_2\|_{L^2(\Omega)}\le C|\log\delta|^{-\kappa},
\qquad
\kappa=\frac{s}{s+2}\frac{2r}{3+2r}.
\end{equation}
The constants $C$ and $\delta_0$ depend only on
$\Omega,K,\omega,\gamma,a_0,\|a\|_{C^{2,\gamma}},
\|f^0\|_{(C^{2,\gamma})^2},M,s,r,d_1,d_2$, and the
fixed small-data radius.  In particular, they are uniform for
$F_1,F_2\in\mathcal A_{M,s}(K)$.
\end{theorem}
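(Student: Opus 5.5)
We absorb the two known Helmholtz energies into the potential and compare sources through the two entries of $\mathbb Q_F$ that contain the background fields. Write the linearized operator as $-\Delta I+\mathbb V_F$ with $\mathbb V_F=\operatorname{diag}(-\omega^2,-4\omega^2)+\mathbb Q_F$. This is a symmetric $2\times2$ matrix potential. The two potentials differ by
\[
\mathbb V_{F_1}-\mathbb V_{F_2}=2a\begin{pmatrix}w_2&w_1\\ w_1&0\end{pmatrix},\qquad w_1=u^0_{\omega,F_1}-u^0_{\omega,F_2},\quad w_2=u^0_{2\omega,F_1}-u^0_{2\omega,F_2}.
\]
By symmetry of $\mathbb V_{F_j}$ and Green's formula, we obtain the bilinear Alessandrini identity
\[
\int_\Omega \big((\mathbb V_{F_1}-\mathbb V_{F_2})V_1\big)\cdot V_2\dd x=\ip{(\Nmap_{F_1}-\Nmap_{F_2})V_1|_{\partial\Omega}}{V_2|_{\partial\Omega}},
\]
where $V_1$ solves the system for $F_1$ and $V_2$ solves it for $F_2$. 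The pairing is bilinear, not sesquilinear.

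\textbf{CGO solutions and Fourier bound.} Fix $\xi\in\R^3$ and choose $\zeta_1,\zeta_2\in\C^3$ with $\zeta_j\cdot\zeta_j=0$, $\zeta_1+\zeta_2=-i\xi$ and $|\zeta_j|\sim\tau$. Take vector CGO solutions $V_j=e^{x\cdot\zeta_j}(e_k+R_j)$, with $\|R_j\|_{L^2}\lesssim\tau^{-1}$ uniformly. This uses the standard Sylvester--Uhlmann construction, valid because $\mathbb V_F$ is bounded in $L^\infty$ uniformly over $\mathcal A_{M,s}(K)$ by the well-posedness lemma. Choosing $e_1\otimes e_2$ gives $\widehat{(aw_1)}(\xi)$ up to a factor, and choosing $e_1\otimes e_1$ gives $\widehat{(aw_2)}(\xi)$. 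The boundary norms of $V_j$ grow like $e^{C\tau}$. Extending $w_j$ by zero to $\R^3$, the identity then yields
\[
|\widehat{aw_j}(\xi)|\le C\big(e^{C\tau}\delta+\tau^{-1}\big),\qquad |\xi|\le\tau .
\]
The zero extension of $w_j\in H^2(\Omega)$ lies only in $H^r(\R^3)$ for $r<3/2$, with norm bounded uniformly, since the traces of $w_j$ need not vanish.

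\textbf{Frequency splitting.} Split frequencies at $\rho$:
\[
\|aw_1\|_{H^{-1}}^2\lesssim \rho^3\big(e^{C\tau}\delta+\tau^{-1}\big)^2+\rho^{-2r-2}.
\]
Choose $\tau=c|\log\delta|$ and optimize $\rho$. This gives a bound of the form $\|aw_1\|_{H^{-2}}\lesssim|\log\delta|^{-2r/(3+2r)}$ (or similar in a suitable negative norm), which matches the second factor of $\kappa$.

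\textbf{Recovering the source.} The background equations give
\[
F_1-F_2=(-\Delta-\omega^2)w_1+2a\big(u^0_{2\omega,F_1}w_1+u^0_{\omega,F_2}w_2\big).
\]
Here $w_1,w_2$ vanish on $\partial\Omega$, since the boundary datum $f^0$ is common. Because $F_j$ is supported in $K$, $F_1-F_2\in H^{-2}(\R^3)$ is controlled by the $L^2$-type norms of $w_1,w_2$. The small terms are controlled using $a\ge a_0$ and the smoothness of $a$, which let us divide $aw_j$ by $a$. A cleaner route is to test $F_1-F_2$ against $\varphi\in H^2_0$ supported near $K$ and integrate by parts. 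Either way, $\|F_1-F_2\|_{H^{-2}}\lesssim|\log\delta|^{-2r/(3+2r)}$. Interpolating between $H^{-2}$ and the a priori bound $\|F_1-F_2\|_{H^s}\le 2M$ gives
\[
\|F\|_{L^2}\le\|F\|_{H^{-2}}^{s/(s+2)}\|F\|_{H^s}^{2/(s+2)},
\]
which produces $\kappa$.

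\textbf{Main obstacle.} The hardest step is getting the Fourier estimate uniform and clean. Several things must be tracked: the exponential growth of the CGO traces in the $H^{1/2}$ norm, the remainder decay for a non-diagonal matrix potential, and the fact that the zero extension of $w_j$ has only $r<3/2$ regularity. The last point, rather than smoothness of $w_j$, limits the high-frequency tail and hence the exponent.
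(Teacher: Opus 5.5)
Your route is the paper's. You absorb $\operatorname{diag}(\omega^2,4\omega^2)$ into a symmetric matrix potential and use the bilinear Alessandrini identity with null CGO solutions whose phases satisfy $\zeta_1+\zeta_2=-i\xi$. You read off the Fourier transforms of $aw_2$ and $aw_1$ from the $(1,1)$ and $(2,1)$ entries, bound $F_1-F_2$ in $H^{-2}$ through the fundamental-frequency equation with a cutoff near $K$, and interpolate against the $H^s$ prior.

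There is, however, a genuine break at the frequency-splitting step, which you carry out in the wrong norm. Your source step (testing against $\varphi\in H_0^2$ and moving $-\Delta-\omega^2$ onto $\varphi$) requires
\[
\|w_1\|_{L^2(\Omega)}+\|w_2\|_{L^2(\Omega)}\lesssim|\log\delta|^{-2r/(3+2r)}.
\]
A bound on $aw_j$ in $H^{-1}$ or $H^{-2}$, which is what you state, does not suffice. If $w_1$ is controlled only in $H^{-2}$, then $(-\Delta-\omega^2)w_1$ is controlled only in $H^{-4}$, and the final interpolation exponent degrades to $s/(s+4)$. Moreover, the split you display does not give the exponent you assert. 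Balancing $\rho^3\tau^{-2}$ against $\rho^{-2r-2}$ gives $\rho=\tau^{2/(2r+5)}$ and $\|aw_1\|_{H^{-1}}\lesssim\tau^{-(2r+2)/(2r+5)}$. The exponent $2r/(3+2r)$ arises only from the $L^2$ balance.

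The repair is to split directly in $L^2$, as the paper does. Let $q_j=E_0(aw_j)$. The choice $\tau=c|\log\delta|$ with $c$ small makes $e^{C\tau}\delta\lesssim\tau^{-1}$, so $|\widehat{q_j}(\xi)|\lesssim\tau^{-1}$ for $|\xi|\le\tau$. Together with Plancherel and the uniform $H^r(\R^3)$ bound on $q_j$, this gives
\[
\|q_j\|_{L^2}^2\lesssim\rho^3\tau^{-2}+\rho^{-2r}.
\]
The choice $\rho=\tau^{2/(3+2r)}$ is admissible because $\rho\le\tau$ for large $\tau$, and it yields $\|q_j\|_{L^2}\lesssim\tau^{-2r/(3+2r)}$. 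Dividing pointwise by $a\ge a_0$ gives the required $L^2$ bound on $w_j$; no smoothness of $a$ is needed there. With this change, the rest of your argument produces $\kappa$.

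A minor correction: $w_j$ has zero Dirichlet trace. It is $\partial_\nu w_j$ that may fail to vanish, and that is what restricts the zero extension to $r<3/2$.
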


The exponent above is the one produced by the present argument.  No
matching instability construction is proved here, so no optimality
assertion is made.  Section~2 records the analytic preliminaries.
Section~3 establishes the small real solution branch, its
differentiability, and the Sobolev realization of the first boundary
variation.  Section~4 proves logarithmic stability for the two internal
background fields, and Section~5 converts that estimate into conditional
stability and uniqueness for the source.

\paragraph{Acknowledgements.}
L.Z. is supported in part by the National Key R\&D Program of China
(No.~2024YFA1012300) and the National Natural Science Foundation of China
(No.~12271482). T.Z. is supported by the National Natural Science Foundation
of China (Grant No.~12371426) and the Basic Public Welfare Research Program
of Zhejiang Province (Grant No.~LDQ24A010001).

\section{Analytic Preliminaries}\label{sec:preliminaries}

This section fixes the Fourier and Sobolev conventions and isolates three technical facts used later: zero extension below the critical exponent $3/2$, localization of a compactly supported distribution in a negative Sobolev norm, and the trace growth of a CGO solution.  Writing these ingredients separately also identifies exactly where the restrictions $r<3/2$ and $K\Subset\Omega$ enter the argument.

\subsection{Fourier Transform, Sobolev Spaces, and Weighted Spaces}

For an integrable function $v$ on $\R^3$ we use
\begin{equation}\label{eq:fourier_convention}
\widehat v(\xi)=\int_{\R^3}\e^{-ix\cdot\xi}v(x)\dd x,
\qquad
v(x)=(2\pi)^{-3}\int_{\R^3}\e^{ix\cdot\xi}\widehat v(\xi)\dd\xi.
\end{equation}
Thus Plancherel's identity contains the harmless factor $(2\pi)^{-3}$.  For $t\in\R$,
\begin{equation}\label{eq:sobolev_fourier_norm}
\|v\|_{H^t(\R^3)}^2=(2\pi)^{-3}
\int_{\R^3}\langle\xi\rangle^{2t}|\widehat v(\xi)|^2\dd\xi,
\qquad \langle\xi\rangle=(1+|\xi|^2)^{1/2}.
\end{equation}
The symbol $E_0v$ denotes extension by zero from $\Omega$ to $\R^3$ whenever that operation is bounded in the space under discussion.

The global CGO construction uses the weighted spaces in the classical
Schr\"odinger argument of Sylvester and Uhlmann.  For
$-1<\delta_w<0$, set
\begin{align}
L^2_{\delta_w}(\R^3)
&=\{v:\langle x\rangle^{\delta_w}v\in L^2(\R^3)\},\label{eq:weighted_l2}\\
H^1_{\delta_w}(\R^3)
&=\{v:\partial^\alpha v\in L^2_{\delta_w}(\R^3),\ |\alpha|\le1\},\label{eq:weighted_h1}
\end{align}
with their natural norms.  The subscript $\delta_w$ is kept distinct from the measurement discrepancy $\delta$ used in Theorem~\ref{thm:main_theorem}.

All vector and matrix norms below are Euclidean or Frobenius norms componentwise.  Constants denoted by $C$ may change from line to line.  Unless a dependence is displayed explicitly, $C$ is uniform on the admissible class and depends only on the fixed quantities listed in Theorem~\ref{thm:main_theorem}.

\subsection{Zero Extension below the Boundary Threshold}

The background differences have zero Dirichlet trace but their first normal derivatives need not agree on the boundary.  The following lemma is the precise zero-extension statement needed for the Fourier tail.

\begin{lemma}[zero extension]\label{lem:zero_extension}
Let $\Omega\subset\R^3$ have smooth boundary.  If $w\in H^2(\Omega)\cap H_0^1(\Omega)$, then for every $0\le r<3/2$ its zero extension satisfies
\begin{equation}\label{eq:zero_extension_bound}
\|E_0w\|_{H^r(\R^3)}\le C_r\|w\|_{H^2(\Omega)}.
\end{equation}
If, in addition, $b\in C^{2}(\overline\Omega)$, then $bw\in H^2(\Omega)\cap H_0^1(\Omega)$ and
\begin{equation}\label{eq:multiplied_zero_extension}
\|E_0(bw)\|_{H^r(\R^3)}+\|E_0(bw)\|_{L^1(\R^3)}
\le C_{r,\Omega}\|b\|_{C^2(\overline\Omega)}\|w\|_{H^2(\Omega)}.
\end{equation}
\end{lemma}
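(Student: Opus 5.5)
The zero-extension bound \eqref{eq:zero_extension_bound} will come from the classical fact that, for a smooth bounded domain, extension by zero $E_0:H^r(\Omega)\to H^r(\R^3)$ is bounded whenever $0\le r<1/2$. For $1/2<r<3/2$ it is bounded on $H^r_0(\Omega)$, the closure of $C_c^\infty(\Omega)$ in $H^r(\Omega)$. For $1/2<r<3/2$, and $r\ne1/2$, this space coincides with $\{v\in H^r(\Omega):v|_{\partial\Omega}=0\}$, because only the zeroth-order trace is defined in that range. This is the standard Lions--Magenes / Grisvard description of $H^r_0$ and $\widetilde H^r$; I would quote it rather than reprove it.

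With this in hand, the argument for $w\in H^2(\Omega)\cap H^1_0(\Omega)$ runs as follows.
\begin{enumerate}
\item For $1/2<r<3/2$, observe that $w\in H^r(\Omega)$ with $\|w\|_{H^r}\le C\|w\|_{H^2}$ and that $w|_{\partial\Omega}=0$. Hence $w\in H^r_0(\Omega)=\widetilde H^r(\Omega)$, and $\|E_0w\|_{H^r(\R^3)}\le C_r\|w\|_{H^r(\Omega)}\le C_r\|w\|_{H^2(\Omega)}$.
\item For $0\le r\le 1/2$, dominate by the case $r'=1$: $\|E_0w\|_{H^r}\le\|E_0w\|_{H^1}$, and $E_0w\in H^1(\R^3)$ with $\nabla(E_0w)=E_0\nabla w$ precisely because $w\in H^1_0$. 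This also removes any need to treat the exceptional point $r=1/2$ separately.
\end{enumerate}
The $r=1$ case can also be checked directly by the same gradient identity, which is a useful sanity check for the interpolation in step 1.

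For the strict restriction $r<3/2$, the point is that $\nabla w$ does not vanish on $\partial\Omega$ in general. The zero extension therefore has a jump in its normal derivative, and a function of the form $\rho(x_3)\,x_3\mathbf 1_{x_3>0}$ fails to lie in $H^{3/2}$. This matches the remark before the lemma; it is not needed for the proof, but it explains why the range stops at $3/2$.

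For \eqref{eq:multiplied_zero_extension}, take $b\in C^2(\overline\Omega)$.
\begin{itemize}
\item By the Leibniz rule, $bw\in H^2(\Omega)$ with $\|bw\|_{H^2}\le C\|b\|_{C^2}\|w\|_{H^2}$.
\item The trace of $bw$ is $b|_{\partial\Omega}\cdot w|_{\partial\Omega}=0$, so $bw\in H^1_0(\Omega)$; one can see this by approximating $w$ in $H^1$ by $C_c^\infty(\Omega)$ functions and multiplying by $b$.
\item Applying the first part to $bw$ gives the $H^r$ bound.
\item For the $L^1$ term, $\Omega$ is bounded, so Cauchy--Schwarz gives $\|E_0(bw)\|_{L^1(\R^3)}=\|bw\|_{L^1(\Omega)}\le|\Omega|^{1/2}\|b\|_{C^0}\|w\|_{L^2}$, which is bounded by the right-hand side of \eqref{eq:multiplied_zero_extension}.
\end{itemize}

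The only nontrivial step is the identification $\{v\in H^r(\Omega):v|_{\partial\Omega}=0\}=\widetilde H^r(\Omega)$ for $1/2<r<3/2$, together with the boundedness of $E_0$ on that space.

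If a self-contained argument were wanted, I would flatten the boundary with a partition of unity and smooth charts. This reduces to the half-space, where $v\in H^r(\R^3_+)$ vanishes on $x_3=0$. There I would use the intrinsic Slobodeckij characterization: the norm of $E_0v$ differs from that of $v$ by the Hardy-type term $\int|v(x)|^2x_3^{-2(r-1)}\,dx$ applied to $\nabla v$. This term is finite exactly when $2(r-1)<1$, which is where $r<3/2$ enters. I would try the Hardy inequality $\|x_3^{-\theta}\partial v\|_{L^2}\lesssim\|\partial v\|_{H^{\theta}}$ for $\theta=r-1<1/2$ first.
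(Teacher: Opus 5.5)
Your proof is correct, but your main route differs from the paper's.

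\textbf{Your route.} You work directly at level $r\in(1/2,3/2)$ and quote the Lions--Magenes/Grisvard identification
$H^r_0(\Omega)=\widetilde H^r(\Omega)=\{v\in H^r(\Omega):v|_{\partial\Omega}=0\}$. This gives boundedness of $E_0$ on trace-zero $H^r$ functions in one step.

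\textbf{The paper's route.} It works one derivative lower and needs only a more basic fact. Since $w\in H^1_0(\Omega)$, one has $\partial_k(E_0w)=E_0(\partial_kw)$. Zero extension is bounded on $H^t(\Omega)$ for $t<1/2$ with no trace condition at all. Applying this to $\partial_kw\in H^1(\Omega)\subset H^t(\Omega)$, together with $E_0w\in L^2$, gives $E_0w\in H^{1+t}(\R^3)$ for $1\le 1+t<3/2$. The range $r<1$ then follows by interpolation. Your handling of $0\le r\le 1/2$ by monotonicity from the case $r=1$ is equivalent and slightly simpler.

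\textbf{Comparison.} The paper's argument is more elementary. The identification you cite is itself usually proved by exactly this reduction plus a Hardy inequality. The paper's version also makes visible that the threshold $3/2=1+1/2$ comes from the jump of $\nabla(E_0w)$ across $\partial\Omega$. Your ``self-contained'' fallback is precisely the paper's argument written in Slobodeckij form. There you apply the Hardy inequality $\|x_3^{-\theta}\partial v\|_{L^2}\lesssim\|\partial v\|_{H^\theta}$, with $\theta=r-1<1/2$, to $\nabla v$, combined with $\nabla(E_0v)=E_0\nabla v$. What your black-box route buys is a statement for arbitrary trace-zero $H^r$ functions, including $1/2<r<1$ without $H^1$ regularity. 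That generality is not needed here, and it costs reliance on a heavier theorem.

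Your treatment of $bw$ (Leibniz rule, approximation to get $bw\in H^1_0$) and of the $L^1$ term matches the paper. One small point: your illustrative non-example $\rho(x_3)\,x_3\mathbf 1_{x_3>0}$ needs a cutoff in the tangential variables to lie in $L^2(\R^3)$. This does not affect the proof.
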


\begin{proof}
Because $w$ has zero trace, the distributional first derivatives of $E_0w$ are the zero extensions of the first derivatives of $w$; no boundary measure occurs.  In local boundary coordinates, zero extension maps $H^t(\Omega)$ boundedly to $H^t(\R^3)$ for every $t<1/2$, without a trace condition.  Applying this fact to $\partial_kw\in H^1(\Omega)$ gives
\[
\partial_k(E_0w)=E_0(\partial_kw)\in H^t(\R^3),
\qquad 0\le t<\tfrac12.
\]
Together with $E_0w\in L^2(\R^3)$, this proves \eqref{eq:zero_extension_bound} for $1\le r=1+t<3/2$; the range $0\le r<1$ follows by interpolation.  This is the standard zero-extension theorem in fractional Sobolev spaces; see \cite{AdamsFournier}.  The product estimate in $H^2(\Omega)$ and the trace identity $(bw)|_{\partial\Omega}=0$ give the first term in \eqref{eq:multiplied_zero_extension}.  The $L^1$ term follows from H\"older's inequality and the boundedness of $\Omega$.
\end{proof}

\begin{remark}\label{rem:critical_three_halves}
The upper endpoint in Lemma~\ref{lem:zero_extension} is structural.  Unless $\partial_\nu w$ also vanishes, the zero extension may fail to lie in $H^{3/2}(\R^3)$.  This is why the stability theorem is stated for an arbitrary fixed $r<3/2$, not for $r=2$ even though the interior fields are $H^2$.
\end{remark}

\subsection{Localization in Negative Sobolev Spaces}

For $m\in\mathbb N$, we write $H^{-m}(\Omega)=(H_0^m(\Omega))^*$ with the usual dual norm.  A distribution supported away from the boundary may be compared with its full-space extension without creating an uncontrolled boundary term.

\begin{lemma}[fixed-support localization]\label{lem:negative_extension}
Let $K\Subset\Omega$ be fixed and let $G\in H^{-2}(\Omega)$ satisfy $\supp G\subset K$.  Its extension by zero, denoted by $\widetilde G$, belongs to $H^{-2}(\R^3)$ and
\begin{equation}\label{eq:negative_extension_general}
\|\widetilde G\|_{H^{-2}(\R^3)}\le C_{K,\Omega}\|G\|_{H^{-2}(\Omega)}.
\end{equation}
\end{lemma}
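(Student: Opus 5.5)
The plan is a duality argument with a cutoff. Fix $\chi\in C_c^\infty(\Omega)$ with $\chi\equiv1$ on a neighbourhood of $K$; since $K\Subset\Omega$ is fixed, $\chi$ depends only on $K$ and $\Omega$, and so $\|\chi\|_{C^2}\le C_{K,\Omega}$. The extension $\widetilde G$ is defined on $\R^3$ by
\[
\langle\widetilde G,\varphi\rangle:=\langle G,\chi\varphi\rangle_{H^{-2}(\Omega)\times H_0^2(\Omega)},\qquad \varphi\in C_c^\infty(\R^3).
\]
This definition does not depend on the choice of such $\chi$. Indeed, if $\chi'$ is another admissible cutoff, then $(\chi-\chi')\varphi$ is supported in $\Omega\setminus K$, which lies outside $\supp G$, so $G$ annihilates it; strictly speaking one should use the neighbourhood where both cutoffs equal one. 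For the same reason $\widetilde G$ restricts to $G$ on $\Omega$ and vanishes outside $K$, so it really is the zero extension.

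The estimate then comes from the duality $H^{-2}(\R^3)=(H^2(\R^3))^*$. For $\varphi\in C_c^\infty(\R^3)$ we have $\chi\varphi\in C_c^\infty(\Omega)\subset H_0^2(\Omega)$, and the Leibniz rule gives
\[
\|\chi\varphi\|_{H^2(\Omega)}\le C\|\chi\|_{C^2}\|\varphi\|_{H^2(\R^3)}.
\]
Hence
\[
|\langle\widetilde G,\varphi\rangle|\le\|G\|_{H^{-2}(\Omega)}\|\chi\varphi\|_{H_0^2(\Omega)}\le C_{K,\Omega}\|G\|_{H^{-2}(\Omega)}\|\varphi\|_{H^2(\R^3)}.
\]
Because $C_c^\infty(\R^3)$ is dense in $H^2(\R^3)$, $\widetilde G$ extends to a bounded functional on $H^2(\R^3)$, which proves \eqref{eq:negative_extension_general}. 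One should also note that the $H^{-2}(\R^3)$ norm defined through the Fourier weight in \eqref{eq:sobolev_fourier_norm} coincides with this dual norm, via Plancherel with the $(2\pi)^{-3}$ normalisation.

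The only delicate step is the well-definedness, that is, the support argument. The precise meaning of $\supp G\subset K$ for an element of $(H_0^2)^*$ is that $G$ vanishes on test functions supported in $\Omega\setminus K$. For $(1-\chi)\psi$ with $\psi\in H_0^2(\Omega)$, one approximates by $C_c^\infty$ functions supported away from $K$; this approximation is where the margin between $K$ and the set $\{\chi=1\}$ is used. Everything else is routine.
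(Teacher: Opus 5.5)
Your proposal is correct and follows the paper's proof: the same cutoff $\chi\in C_c^\infty(\Omega)$ equal to one near $K$, the identity $\langle\widetilde G,\psi\rangle=\langle G,\chi\psi\rangle$, and the multiplier bound $\|\chi\psi\|_{H_0^2(\Omega)}\le C\|\psi\|_{H^2(\R^3)}$. Your extra checks, that the definition does not depend on $\chi$ and really gives the zero extension, spell out what the paper compresses into the phrase ``support localization.''
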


\begin{proof}
Choose $\chi\in C_c^\infty(\Omega)$ with $\chi=1$ on a neighborhood of $K$.  For $\psi\in H^2(\R^3)$, support localization and the multiplier theorem give
\[
|\ip{\widetilde G}{\psi}_{\R^3}|
=|\ip{G}{\chi\psi}_{\Omega}|
\le \|G\|_{H^{-2}(\Omega)}\|\chi\psi\|_{H_0^2(\Omega)}
\le C_\chi\|G\|_{H^{-2}(\Omega)}\|\psi\|_{H^2(\R^3)}.
\]
Taking the supremum over $\psi$ proves the result.
\end{proof}

\subsection{A Trace Estimate for Exponential Solutions}

\begin{lemma}[CGO trace growth]\label{lem:cgo_trace}
Assume $\Omega\subset B_R(0)$, $|\zeta|\le C_\zeta\tau$, and
\[
W(x)=\e^{\zeta\cdot x}(p+\Phi(x)),
\qquad |p|\le1,\quad
\|\Phi\|_{L^2(\Omega)}\le C\tau^{-1},\quad
\|\Phi\|_{H^1(\Omega)}\le C.
\]
For $\tau\ge1$,
\begin{equation}\label{eq:cgo_trace_growth}
\|W|_{\partial\Omega}\|_{H^{1/2}(\partial\Omega)}
\le C\tau\e^{R|\operatorname{Re}\zeta|}
\le C\tau\e^{R\tau}.
\end{equation}
\end{lemma}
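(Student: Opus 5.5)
The plan is to use the trace theorem on $\partial\Omega$ to reduce \eqref{eq:cgo_trace_growth} to an $H^1(\Omega)$ bound for $W$:
\[
\|W|_{\partial\Omega}\|_{H^{1/2}(\partial\Omega)}\le C_\Omega\|W\|_{H^1(\Omega)}.
\]
Write $W=\e^{\zeta\cdot x}g$ with $g=p+\Phi$. By the product rule,
\[
\nabla W=\e^{\zeta\cdot x}\bigl(\zeta g+\nabla\Phi\bigr),
\]
and we have the pointwise bound $|\e^{\zeta\cdot x}|=\e^{\operatorname{Re}\zeta\cdot x}\le \e^{R|\operatorname{Re}\zeta|}$ for $x\in\Omega\subset B_R(0)$.

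Next I estimate each piece in $L^2(\Omega)$, pulling out the sup of the exponential weight.
\begin{itemize}
\item Since $|\Omega|<\infty$ and $|p|\le 1$, we get $\|g\|_{L^2(\Omega)}\le |\Omega|^{1/2}+C\tau^{-1}\le C$.
\item Using $|\zeta|\le C_\zeta\tau$, this gives $\|\zeta g\|_{L^2(\Omega)}\le C\tau$.
\item By hypothesis, $\|\nabla\Phi\|_{L^2(\Omega)}\le C$.
\end{itemize}
Combining these,
\[
\|W\|_{H^1(\Omega)}\le \e^{R|\operatorname{Re}\zeta|}\bigl(C+C\tau+C\bigr)\le C\tau\,\e^{R|\operatorname{Re}\zeta|},
\]
where the last step uses $\tau\ge1$ to absorb the constants. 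This gives the first inequality in \eqref{eq:cgo_trace_growth}.

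For the second inequality, I would note that in the intended application the CGO parameter has $|\operatorname{Re}\zeta|=\tau$. In general $|\operatorname{Re}\zeta|\le|\zeta|\le C_\zeta\tau$, so one gets $\e^{RC_\zeta\tau}$. I would therefore interpret the final bound with $R$ replaced by $RC_\zeta$, or simply assume the normalization $|\operatorname{Re}\zeta|\le\tau$.

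The main (minor) obstacle is exactly this normalization of $\operatorname{Re}\zeta$. The rest is routine, because the exponential factor is smooth and bounded on $\overline\Omega$, so multiplying by it keeps everything in $H^1(\Omega)$ with the stated control.
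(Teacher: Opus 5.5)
Your proof is correct and is essentially the paper's argument. The trace theorem reduces everything to $\|W\|_{H^1(\Omega)}$, and the product rule $\nabla W=\e^{\zeta\cdot x}(\zeta(p+\Phi)+\nabla\Phi)$, together with $|\e^{\zeta\cdot x}|\le\e^{R|\operatorname{Re}\zeta|}$, $|\zeta|\le C_\zeta\tau$ and the bounds on $\Phi$, gives the first inequality.

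Your caveat about the second inequality is also well taken. It requires $|\operatorname{Re}\zeta|\le\tau$, which does not follow from $|\zeta|\le C_\zeta\tau$ alone when $C_\zeta>1$. It does hold in the paper's only application, since there $|\operatorname{Re}\zeta_j|=\tau$ by \eqref{eq:common_phase_sizes}.
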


\begin{proof}
The trace theorem gives $\|W\|_{H^{1/2}(\partial\Omega)}\le C\|W\|_{H^1(\Omega)}$.  Since $|\e^{\zeta\cdot x}|\le \e^{R|\operatorname{Re}\zeta|}$ in $\Omega$ and
\[
\nabla W=\e^{\zeta\cdot x}\big(\zeta(p+\Phi)+\nabla\Phi\big),
\]
the asserted bound follows immediately from $|\zeta|\le C_\zeta\tau$ and the two bounds for $\Phi$.
\end{proof}

\section{Well-posedness of the Coupled System}

In this section we construct the small real solution branch of the direct problem.  The uniqueness statement is local in the solution space, as is natural for an argument based on the implicit function theorem.

\subsection{Quantitative Implicit Function Theorem in Banach Spaces}

We use a quantitative implicit function theorem so that the data radius
and the local solution radius remain explicit throughout the argument.
For completeness, Appendix~\ref{app:qift_proof} gives the standard proof
by a contraction whose constant is uniform in the parameter.

\begin{lemma} \label{lem:qift}
Let $X$, $Y$, and $Z$ be Banach spaces, and let $W\subset X\times Y$
be open.  Suppose that $\mathcal F\in C^m(W;Z)$, $m\ge1$,
$\mathcal F(x_0,y_0)=0$, and
$D_2\mathcal F(x_0,y_0):Y\to Z$ is boundedly invertible.  Let
$r,\delta>0$ be such that
$\overline B_r^X(x_0)\times\overline B_\delta^Y(y_0)\subset W$.  If
\begin{equation} \label{eq:qift_cond1}
\sup_{x \in \overline B_r^X(x_0)} \|[D_2\mathcal{F}(x_0,y_0)]^{-1}\mathcal{F}(x,y_0)\|_Y \le \frac{1}{2}\delta,
\end{equation}
\begin{equation} \label{eq:qift_cond2}
\sup_{(x,y) \in \overline B_r^X(x_0) \times \overline B_\delta^Y(y_0)} \|[D_2\mathcal{F}(x_0,y_0)]^{-1}D_2\mathcal{F}(x,y) - I\|_{\mathcal{L}(Y,Y)} \le \frac{1}{2},
\end{equation}
then there is a unique continuous map
$u:\overline B_r^X(x_0)\to\overline B_\delta^Y(y_0)$ satisfying
$u(x_0)=y_0$ and $\mathcal F(x,u(x))=0$.  Its restriction to the open
ball $B_r^X(x_0)$ is $C^m$.
\end{lemma}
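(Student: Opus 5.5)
The statement is the quantitative implicit function theorem, Lemma~\ref{lem:qift}. I will prove it by the standard Newton-type contraction with the frozen derivative $A:=D_2\mathcal F(x_0,y_0)$. For each $x\in\overline B_r^X(x_0)$ define
\[
T_x(y)=y-A^{-1}\mathcal F(x,y),\qquad y\in\overline B_\delta^Y(y_0).
\]
Fixed points of $T_x$ are exactly the solutions of $\mathcal F(x,y)=0$ in the closed ball. The plan is to show that $T_x$ maps $\overline B_\delta^Y(y_0)$ into itself and is a contraction with constant $1/2$, uniformly in $x$. Banach's fixed point theorem on the complete set $\overline B_\delta^Y(y_0)$ then gives a unique $u(x)$, and $u(x_0)=y_0$ because $\mathcal F(x_0,y_0)=0$ and the fixed point is unique.

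\emph{Contraction.} We have $D_yT_x(y)=I-A^{-1}D_2\mathcal F(x,y)$, whose norm is at most $1/2$ on the convex ball by \eqref{eq:qift_cond2}. The mean value inequality then gives $\|T_x(y)-T_x(y')\|\le\frac12\|y-y'\|$.

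\emph{Self-map.} By the contraction estimate and \eqref{eq:qift_cond1},
\[
\|T_x(y)-y_0\|\le\|T_x(y)-T_x(y_0)\|+\|A^{-1}\mathcal F(x,y_0)\|\le\tfrac12\delta+\tfrac12\delta=\delta .
\]

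\emph{Continuity of $u$.} This follows from the uniform contraction:
\[
\|u(x)-u(x')\|\le\tfrac12\|u(x)-u(x')\|+\|A^{-1}(\mathcal F(x,u(x'))-\mathcal F(x',u(x')))\|,
\]
so $\|u(x)-u(x')\|\le2\|A^{-1}\|\,\|\mathcal F(x,u(x'))-\mathcal F(x',u(x'))\|$, which tends to zero as $x\to x'$ by continuity of $\mathcal F$.

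\emph{Regularity.} This is the main point requiring care. At any $(x,u(x))$ the operator $A^{-1}D_2\mathcal F(x,u(x))=I-(I-A^{-1}D_2\mathcal F)$ is invertible by a Neumann series, since the perturbation has norm at most $1/2$. Hence $D_2\mathcal F(x,u(x))$ is boundedly invertible. For $x$ in the open ball, $(x,u(x))$ lies in the open set $W$. The classical implicit function theorem therefore gives a $C^m$ local solution $\tilde u$ near $x$ with $\tilde u(x)=u(x)$.

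By continuity of both maps, $\tilde u(x')$ stays in a small neighborhood of $u(x)$ for $x'$ near $x$. The subtle step is to check that the local branch $\tilde u$ coincides with the global one. Rather than invoking the local uniqueness neighbourhood of the classical theorem, I will argue as follows. $\tilde u(x')$ solves $\mathcal F(x',\cdot)=0$, so it is a fixed point of $T_{x'}$. Once $\tilde u(x')\in\overline B_\delta^Y(y_0)$, uniqueness of the fixed point in the closed ball forces $\tilde u(x')=u(x')$. If $u(x)$ lies on the boundary sphere, $\tilde u(x')$ might exit the ball. To handle this, note that the contraction estimate holds on a slightly larger ball $\overline B_{\delta'}^Y(y_0)\subset W$ by continuity, and on that ball uniqueness still holds among fixed points. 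Alternatively, I will use directly that local uniqueness near $u(x)$ in the classical theorem combined with continuity of $u$ identifies the two branches.

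With $u=\tilde u$ locally, $u$ is $C^m$ on $B_r^X(x_0)$, and its derivative is
\[
Du=-[D_2\mathcal F]^{-1}D_1\mathcal F .
\]
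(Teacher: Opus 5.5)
Your proposal is correct and is essentially the paper's own proof: the frozen-derivative map $y\mapsto y-[D_2\mathcal F(x_0,y_0)]^{-1}\mathcal F(x,y)$, self-mapping and a uniform $\tfrac12$-contraction obtained from \eqref{eq:qift_cond1}--\eqref{eq:qift_cond2}, Banach's fixed point theorem, a Neumann series for $D_2\mathcal F(x,u(x))$, and then the classical implicit function theorem; you are more explicit than the paper about identifying the local $C^m$ branch with $u$. For that identification, keep your second route (local uniqueness in the classical theorem plus continuity of $u$), because the ``slightly larger ball'' variant is not justified as stated in infinite dimensions: the sphere $\partial B_\delta^Y(y_0)$ is not compact, so continuity of $D_2\mathcal F$ gives no uniform contraction bound beyond radius $\delta$, and $W$ need not even contain a larger concentric ball (a version localized near $(x,u(x))$ does work).
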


\subsection{Well-posedness of the Direct Boundary Value Problem}

We apply Lemma~\ref{lem:qift} to obtain local existence, uniqueness, and
classical regularity for the coupled system under a quantitative
small-data condition.

\begin{lemma} \label{lem:well_posedness}
Let $\Omega\subset\R^3$ have smooth boundary, let $0<\gamma<1$, and let $a\in C^{2,\gamma}(\overline\Omega;\R)$.  Assume that $\omega^2$ and $4\omega^2$ are not Dirichlet eigenvalues of $-\Delta$.  There exist constants $\varepsilon_{\mathrm{wp}}>0$ and $\rho_*>0$ such that, whenever the real data satisfy
\begin{equation} \label{eq:data_smallness}
\|F\|_{C^\gamma(\overline\Omega)}+
\|f_\omega\|_{C^{2,\gamma}(\partial\Omega)}+
\|f_{2\omega}\|_{C^{2,\gamma}(\partial\Omega)}<\varepsilon_{\mathrm{wp}},
\end{equation}
system \eqref{eq:coupled_system} has a unique real solution
$U=(u_\omega,u_{2\omega})\in(C^{2,\gamma}(\overline\Omega;\R))^2$
in the ball $\|U\|_{(C^{2,\gamma})^2}\le\rho_*$.  Moreover,
\begin{equation} \label{eq:apriori_est}
\|u_\omega\|_{C^{2,\gamma}(\overline\Omega)}+
\|u_{2\omega}\|_{C^{2,\gamma}(\overline\Omega)}
\le C\left(\|F\|_{C^\gamma(\overline\Omega)}+
\|f_\omega\|_{C^{2,\gamma}(\partial\Omega)}+
\|f_{2\omega}\|_{C^{2,\gamma}(\partial\Omega)}\right),
\end{equation}
where $C$ depends on $\Omega,\omega$, the distances of $\omega^2$ and $4\omega^2$ from the Dirichlet spectrum, and $\|a\|_{C^{2,\gamma}(\overline\Omega)}$.
\end{lemma}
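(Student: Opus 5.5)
We will prove Lemma~\ref{lem:well_posedness} by applying Lemma~\ref{lem:qift} to the nonlinear map
\[
\mathcal F:\; X\times Y\to Z,\qquad
X=C^\gamma(\overline\Omega;\R)\times\bigl(C^{2,\gamma}(\partial\Omega;\R)\bigr)^2,\quad
Y=\bigl(C^{2,\gamma}(\overline\Omega;\R)\bigr)^2,
\]
where $Z=\bigl(C^{\gamma}(\overline\Omega;\R)\bigr)^2\times\bigl(C^{2,\gamma}(\partial\Omega;\R)\bigr)^2$.
For data $x=(F,f_\omega,f_{2\omega})$ and $U=(u_\omega,u_{2\omega})$ we set
\[
\mathcal F(x,U)=\Bigl((-\Delta-\omega^2)u_\omega+2au_{2\omega}u_\omega-F,\ (-\Delta-4\omega^2)u_{2\omega}+au_\omega^2,\ u_\omega|_{\partial\Omega}-f_\omega,\ u_{2\omega}|_{\partial\Omega}-f_{2\omega}\Bigr).
\]
The base point is $(x_0,y_0)=(0,0)$, and $\mathcal F(0,0)=0$. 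Since $C^{2,\gamma}$ and $C^\gamma$ are Banach algebras and $a\in C^{2,\gamma}\subset C^\gamma$, the map $\mathcal F$ is a bounded affine map plus a continuous quadratic map. It is therefore $C^\infty$ (in particular $C^m$ for every $m$) on all of $X\times Y$, and we may take $W=X\times Y$.

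The linearization at the origin is $D_2\mathcal F(0,0)V=\bigl((-\Delta-\omega^2)v_1,(-\Delta-4\omega^2)v_2,v_1|_{\partial\Omega},v_2|_{\partial\Omega}\bigr)$, which is block diagonal. The key step is the invertibility of each scalar Dirichlet problem $(-\Delta-k^2)v=g$ in $\Omega$, $v=\phi$ on $\partial\Omega$, with $k^2\in\{\omega^2,4\omega^2\}$, from $C^\gamma\times C^{2,\gamma}(\partial\Omega)$ onto $C^{2,\gamma}(\overline\Omega)$. Uniqueness holds because $k^2\notin\sigma_D(-\Delta)$. For existence, we first subtract a $C^{2,\gamma}$ extension of $\phi$, then invert in $H^1_0$ with the spectral bound $\|v\|_{L^2}\le d_j^{-1}\|\tilde g\|_{L^2}$, and finally upgrade to $C^{2,\gamma}$ by Schauder theory. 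Concretely, the global Schauder estimate
\[
\|v\|_{C^{2,\gamma}}\le C\bigl(\|g\|_{C^\gamma}+\|\phi\|_{C^{2,\gamma}}+\|v\|_{C^0}\bigr)
\]
combined with $L^2$ elliptic regularity and Sobolev embedding controls $\|v\|_{C^0}$ by the data. This yields $\|[D_2\mathcal F(0,0)]^{-1}\|\le C_0$, where $C_0$ depends on $\Omega,\omega,d_1,d_2$. I expect this step to be the main obstacle, in the sense that it is the only place where the nonresonance assumption and the dependence of constants on the spectral gaps enter. The clean route to the $C^0$ bound is the compactness/contradiction argument combined with Fredholm uniqueness; to obtain explicit dependence on $d_j$ we would instead bootstrap from the $L^2$ estimate.

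Next we verify the hypotheses of Lemma~\ref{lem:qift}. Since $\mathcal F(x,0)=(-F,0,-f_\omega,-f_{2\omega})$, we have $\|[D_2\mathcal F(0,0)]^{-1}\mathcal F(x,0)\|_Y\le C_0\|x\|_X$. The difference of derivatives is
\[
D_2\mathcal F(x,U)-D_2\mathcal F(0,0)=2a\begin{pmatrix}u_{2\omega}&u_\omega\\u_\omega&0\end{pmatrix},
\]
acting in the interior components only. Its norm is at most $C_a\|U\|_Y$, with $C_a\le C\|a\|_{C^{2,\gamma}}$. We therefore choose $\rho_*=\delta$ such that $C_0C_a\delta\le 1/2$, and then $r$ such that $C_0r\le\delta/2$. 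Lemma~\ref{lem:qift} then gives a unique $U=u(x)$ with $\|U\|_Y\le\rho_*$ for all data with $\|x\|_X\le r$. We set $\varepsilon_{\mathrm{wp}}=r$; the open ball gives smoothness of the solution map. The solution is real because all spaces are real.

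For the a priori estimate \eqref{eq:apriori_est}, we write $U=[D_2\mathcal F(0,0)]^{-1}\bigl(\mathcal F(0,U)-\mathcal F(x,U)\bigr)$ plus the quadratic remainder, that is,
\[
U=-[D_2\mathcal F(0,0)]^{-1}\bigl(\mathcal F(x,U)-\mathcal F(x,0)-D_2\mathcal F(0,0)U\bigr)+[D_2\mathcal F(0,0)]^{-1}(F,0,f_\omega,f_{2\omega}).
\]
The first term is bounded by $C_0C_a\|U\|^2\le\frac12\|U\|$, using $\|U\|\le\rho_*$. Absorbing it gives $\|U\|_Y\le 2C_0\|x\|_X$, which is \eqref{eq:apriori_est} with constants of the stated dependence.
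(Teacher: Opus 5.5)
Your proposal is correct and follows the paper's proof essentially step by step. It uses the same map on the same spaces $X,Y,Z$, and inverts the block-diagonal linearization at the origin via nonresonance plus Schauder theory. It makes the same parameter choices in Lemma~\ref{lem:qift}, namely $C_0C_A\|a\|\rho_*\le 1/2$ followed by $C_0r_*\le\rho_*/2$, and gets the a priori bound $\|U\|_Y\le 2C_0\|D\|_X$ by the same absorption of the quadratic remainder. Your discussion of the $C^0$ bound, via the $L^2$ spectral-gap estimate, $H^2$ regularity and Sobolev embedding, only fills in a detail that the paper handles by citing the Fredholm alternative and the Gilbarg--Trudinger Schauder estimates.
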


\begin{proof}
We reformulate the boundary value problem as an operator equation and
verify \eqref{eq:qift_cond1}--\eqref{eq:qift_cond2}.  Define the data
space
\begin{equation} \label{eq:space_X}
X=C^\gamma(\overline\Omega;\R)\times C^{2,\gamma}(\partial\Omega;\R)\times C^{2,\gamma}(\partial\Omega;\R),
\end{equation}
with the natural product norm and $D=(F,f_\omega,f_{2\omega})$.  All spaces in this proof are real Banach spaces.  Set
\begin{equation} \label{eq:space_YZ}
Y=(C^{2,\gamma}(\overline\Omega;\R))^2,
\quad
Z=(C^\gamma(\overline\Omega;\R))^2\times(C^{2,\gamma}(\partial\Omega;\R))^2,
\end{equation}
with elements denoted by $U=(u_\omega,u_{2\omega})$ and
$\mathbf H=(g_1,g_2,h_1,h_2)^T$.  Define
$\mathcal M:X\times Y\to Z$ by
\begin{equation} \label{eq:operator_M}
    \mathcal{M}(D, U) = \begin{pmatrix} 
        (-\Delta - \omega^2)u_\omega + 2a(x)u_{2\omega}u_\omega - F \\ 
        (-\Delta - 4\omega^2)u_{2\omega} + a(x)u_\omega^2 \\ 
        u_\omega|_{\partial\Omega} - f_\omega \\ 
        u_{2\omega}|_{\partial\Omega} - f_{2\omega} 
    \end{pmatrix}.
\end{equation}
Then $\mathcal M(0,0)=0$, and solving the boundary value problem is
equivalent to finding $U\in Y$ such that $\mathcal M(D,U)=0$.

For $V=(v_\omega,v_{2\omega})^T\in Y$, the derivative with respect to
the state variable at the origin is
\begin{equation} \label{eq:first_derivative}
    D_2\mathcal{M}(\mathbf{0}, \mathbf{0})V = \begin{pmatrix} 
        (-\Delta - \omega^2)v_\omega \\ 
        (-\Delta - 4\omega^2)v_{2\omega} \\ 
        v_\omega|_{\partial\Omega} \\ 
        v_{2\omega}|_{\partial\Omega} 
    \end{pmatrix}.
\end{equation}
Let $\mathbf{H} = (g_1, g_2, h_1, h_2)^T \in Z$. The linear system $D_2\mathcal{M}(\mathbf{0}, \mathbf{0})V = \mathbf{H}$ uncouples into two independent linear Dirichlet problems for $v_\omega$ and $v_{2\omega}$. Since $\omega^2$ and $4\omega^2$ are not Dirichlet eigenvalues of $-\Delta$ on $\Omega$, the standard Fredholm alternative guarantees a unique strong solution. Furthermore, owing to the smoothness of the boundary $\partial\Omega$, the classical Schauder estimates for elliptic boundary value problems (cf. \cite{GilbargTrudinger}) imply that $v_\omega, v_{2\omega} \in C^{2,\gamma}(\overline{\Omega})$ and satisfy the uniform bounds:
\begin{align}
    \|v_\omega\|_{C^{2,\gamma}(\overline{\Omega})} &\le C_1 \left( \|g_1\|_{C^\gamma(\overline{\Omega})} + \|h_1\|_{C^{2,\gamma}(\partial\Omega)} \right), \label{eq:schauder1} \\
    \|v_{2\omega}\|_{C^{2,\gamma}(\overline{\Omega})} &\le C_2 \left( \|g_2\|_{C^\gamma(\overline{\Omega})} + \|h_2\|_{C^{2,\gamma}(\partial\Omega)} \right). \label{eq:schauder2}
\end{align}
Summing these estimates yields $\|V\|_Y \le C_0 \|\mathbf{H}\|_Z$, where $C_0 = \max\{C_1, C_2\} > 0$. This confirms that $D_2\mathcal{M}(\mathbf{0}, \mathbf{0})$ possesses a uniformly bounded inverse:
\begin{equation} \label{eq:inverse_bound_final}
    \|D_2\mathcal{M}(\mathbf{0}, \mathbf{0})^{-1}\|_{\mathcal{L}(Z,Y)} \le C_0.
\end{equation}

Evaluating the operator at the state $(D, \mathbf{0})$ for $D \in B_r^X(\mathbf{0})$, direct substitution yields $\mathcal{M}(D, \mathbf{0}) = (-F, 0, -f_\omega, -f_{2\omega})^T$. Utilizing the linear inversion bound \eqref{eq:inverse_bound_final}, we obtain:
\begin{align} \label{eq:source_cond}
    \|D_2\mathcal{M}(\mathbf{0}, \mathbf{0})^{-1} \mathcal{M}(D, \mathbf{0})\|_Y &\le \|D_2\mathcal{M}(\mathbf{0}, \mathbf{0})^{-1}\|_{\mathcal{L}(Z,Y)} \|\mathcal{M}(D, \mathbf{0})\|_Z \nonumber \\
    &\le C_0 \left( \|F\|_{C^\gamma(\overline{\Omega})} + \|f_\omega\|_{C^{2,\gamma}(\partial\Omega)} + \|f_{2\omega}\|_{C^{2,\gamma}(\partial\Omega)} \right) = C_0 \|D\|_X.
\end{align}
Taking the supremum over a data ball $B_{r_*}^X(0)$ gives an upper bound $C_0r_*$.  Thus condition \eqref{eq:qift_cond1} holds whenever $C_0r_*\le\rho_*/2$.

We next calculate $D_2\mathcal{M}(D,U)$ for $(D,U)\in B_{r_*}^X(0)\times B_{\rho_*}^Y(0)$.  For $V=(v_\omega,v_{2\omega})^T\in Y$,
\begin{equation} \label{eq:second_derivative_form}
    \left(D_2\mathcal{M}(D, U) - D_2\mathcal{M}(\mathbf{0}, \mathbf{0})\right)[V] = \begin{pmatrix} 
        2a(x)(u_{2\omega}v_\omega + v_{2\omega}u_\omega) \\ 
        2a(x)u_\omega v_\omega \\ 
        0 \\ 
        0 
    \end{pmatrix}.
\end{equation}
The H\"older product estimate gives
$\|(D_2\mathcal M(D,U)-D_2\mathcal M(0,0))V\|_Z
\le C_A\|a\|_{C^\gamma}\|U\|_Y\|V\|_Y$.  Hence
\begin{align} \label{eq:contraction_cond}
&\|D_2\mathcal{M}(\mathbf{0}, \mathbf{0})^{-1}D_2\mathcal{M}(D,U)-I\|_{\mathcal{L}(Y,Y)}\nonumber\\
&\quad\le \|D_2\mathcal{M}(\mathbf{0},\mathbf{0})^{-1}\|_{\mathcal{L}(Z,Y)}
\|D_2\mathcal{M}(D,U)-D_2\mathcal{M}(\mathbf{0},\mathbf{0})\|_{\mathcal{L}(Y,Z)}\nonumber\\
&\quad\le C_0C_A\|a\|_{C^\gamma(\overline\Omega)}\|U\|_Y.
\end{align}
Choose $\rho_*>0$ so that
$C_0C_A\|a\|_{C^\gamma}\rho_*\le1/2$, and then choose the data radius $r_*>0$ so that $C_0r_*\le\rho_*/2$.  Conditions \eqref{eq:qift_cond1}--\eqref{eq:qift_cond2} follow.

Taking $\varepsilon_{\mathrm{wp}}=r_*$, Lemma~\ref{lem:qift} yields a
unique solution in $\overline B_{\rho_*}^Y(0)$.  It remains to prove the linear
a priori estimate uniformly on this ball.  Put
$A=D_2\mathcal M(0,0)$ and write
\[
\mathcal M(D,U)=AU+\mathcal R(U)
-(F,0,f_\omega,f_{2\omega})^T,
\]
where
\[
\mathcal R(U)=
\bigl(2au_{2\omega}u_\omega,au_\omega^2,0,0\bigr)^T.
\]
The H\"older product estimate and \eqref{eq:inverse_bound_final} give
\begin{align}
\|U\|_Y
&\le C_0\|D\|_X+C_0\|\mathcal R(U)\|_Z\nonumber\\
&\le C_0\|D\|_X
   +C_0C_A\|a\|_{C^\gamma}\|U\|_Y^2.       \label{eq:direct_absorption_estimate}
\end{align}
Since $\|U\|_Y\le\rho_*$ and
$C_0C_A\|a\|_{C^\gamma}\rho_*\le1/2$, the last term is absorbed into
the left-hand side.  Consequently
\[
\|U\|_Y\le2C_0\|D\|_X,
\]
which proves \eqref{eq:apriori_est}.  We may decrease
$\varepsilon_{\mathrm{wp}}$ once more, without changing notation, so
that all later base points lie in the open data ball with a fixed
margin.
\end{proof}

\subsection{Smooth dependence and the first boundary variation}

Lemma~\ref{lem:well_posedness} produces more than existence.  It provides a smooth local solution map, which justifies differentiating the nonlinear boundary measurement.  We spell this out because the inverse data in Theorem~\ref{thm:main_theorem} are taken at a generally nonzero, source-dependent background state.

\begin{proposition}[differentiability of the solution and boundary maps]\label{prop:dtn_differentiability}
The map
\begin{equation}\label{eq:solution_map}
\mathcal S:(F,f_\omega,f_{2\omega})\longmapsto
(u_\omega,u_{2\omega})
\end{equation}
from the open real data ball $B_{\varepsilon_{\mathrm{wp}}}^X(0)$ to
$(C^{2,\gamma}(\overline\Omega;\R))^2$ is $C^\infty$.  Consequently,
\begin{equation}\label{eq:nonlinear_dtn_holder}
\Lambda_{a,F}:f\longmapsto
\big(\partial_\nu u_\omega,\partial_\nu u_{2\omega}\big)|_{\partial\Omega}
\end{equation}
is a $C^\infty$ map from a neighborhood of the origin in
$(C^{2,\gamma}(\partial\Omega;\R))^2$ to
$(C^{1,\gamma}(\partial\Omega;\R))^2$.

At the common boundary state $f^0$, the derivative in a direction
$h=(h_\omega,h_{2\omega})$ is the normal trace of the unique solution
$V=(v_\omega,v_{2\omega})$ of
\begin{equation}\label{eq:variation_expanded}
\begin{cases}
(-\Delta-\omega^2)v_\omega
+2a u^0_{2\omega,F}v_\omega+2a u^0_{\omega,F}v_{2\omega}=0,
&\text{in }\Omega,\\
(-\Delta-4\omega^2)v_{2\omega}
+2a u^0_{\omega,F}v_\omega=0,
&\text{in }\Omega,\\
(v_\omega,v_{2\omega})=h,&\text{on }\partial\Omega.
\end{cases}
\end{equation}
In matrix form this is precisely \eqref{eq:linearized_system}.
\end{proposition}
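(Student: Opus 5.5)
The plan is to obtain smoothness of $\mathcal S$ directly from Lemma~\ref{lem:qift} applied with $m=\infty$, or with every finite $m$. The map $\mathcal M:X\times Y\to Z$ in \eqref{eq:operator_M} is a polynomial of degree two in $U$ and affine in $D$. Its components are sums of bounded linear maps (Laplacian $C^{2,\gamma}\to C^\gamma$, trace $C^{2,\gamma}(\overline\Omega)\to C^{2,\gamma}(\partial\Omega)$, multiplication by the constants $\omega^2,4\omega^2$) and the continuous bilinear maps $(u,v)\mapsto 2a\,uv$, $C^{2,\gamma}\times C^{2,\gamma}\to C^\gamma$. These bilinear maps are bounded by the Hölder product estimate already used in the proof of Lemma~\ref{lem:well_posedness}. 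Continuous multilinear maps between Banach spaces are $C^\infty$, so $\mathcal M\in C^\infty(X\times Y;Z)$.

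Lemma~\ref{lem:qift} then gives, for each $m$, that the restriction of the solution map to the open ball $B^X_{r_*}(0)$ is $C^m$. The map does not depend on $m$, because uniqueness in $\overline B^Y_{\rho_*}(0)$ is part of Lemma~\ref{lem:well_posedness}. Hence $\mathcal S$ is $C^\infty$ on $B^X_{\varepsilon_{\mathrm{wp}}}(0)$, where $\varepsilon_{\mathrm{wp}}=r_*$. If one prefers not to rely on the $m=\infty$ clause, one checks instead that $D_2\mathcal M(D,\mathcal S(D))$ is invertible along the branch. This follows from \eqref{eq:qift_cond2}, since $\|A^{-1}D_2\mathcal M-I\|\le 1/2$ lets a Neumann series invert $A^{-1}D_2\mathcal M$. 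The classical implicit function theorem then applies locally at each point and gives $C^\infty$ regularity, and local uniqueness identifies the local branches with $\mathcal S$.

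For the boundary map, fix $F$ and write $\Lambda_{a,F}=\mathcal T\circ\mathcal S(F,\cdot)$, with $\mathcal T(U)=(\partial_\nu u_\omega,\partial_\nu u_{2\omega})|_{\partial\Omega}$. The map $f\mapsto (F,f)$ is affine, hence smooth, into $X$. Its image lies in the data ball when $f$ lies in a neighborhood of $0$, and this neighborhood contains $f^0$ by \eqref{eq:base_boundary_smallness} together with $\|F\|_{C^\gamma}\le\varepsilon_*/2$ and $\varepsilon_*\le\varepsilon_{\mathrm{wp}}/2$. The map $\mathcal T:(C^{2,\gamma}(\overline\Omega))^2\to(C^{1,\gamma}(\partial\Omega))^2$ is bounded and linear, because $\nabla u\in C^{1,\gamma}(\overline\Omega)$ and $\partial\Omega$ is smooth. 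By the chain rule, $\Lambda_{a,F}$ is $C^\infty$ and $D\Lambda_{a,F}(f^0)h=\mathcal T(D_f\mathcal S\,h)$.

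To identify $V=D_f\mathcal S(F,f^0)h$, differentiate the identity $\mathcal M(F,f^0+th,\mathcal S(F,f^0+th))=0$ at $t=0$. This gives $D_2\mathcal M(D^0,U^0_F)V=(0,0,h_\omega,h_{2\omega})^T$. By \eqref{eq:second_derivative_form}, this equation is exactly \eqref{eq:variation_expanded}. Uniqueness of its solution in $Y$ is the invertibility of $D_2\mathcal M(D^0,U^0_F)$ established above. Writing the interior equations as a matrix operator gives \eqref{eq:linearized_system}, with $\mathbb Q_F$ symmetric.

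The only delicate point is bookkeeping rather than a real obstacle. One has to make sure that $(F,f^0)$ lies in the open ball where smoothness holds, and that uniqueness in the $\rho_*$-ball singles out the same branch for all $t$ near $0$. Both follow from the margin fixed in Lemma~\ref{lem:well_posedness} and from continuity of $\mathcal S$.
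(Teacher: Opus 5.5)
Your proposal is correct and follows the same route as the paper. The map $\mathcal M$ is polynomial, hence $C^\infty$, and Lemma~\ref{lem:qift} makes $\mathcal S$ smooth; composition with the bounded normal trace makes $\Lambda_{a,F}$ smooth; and differentiating $\mathcal M(F,f^0+th,\mathcal S(F,f^0+th))=0$ at $t=0$ yields \eqref{eq:variation_expanded}, with your proof of uniqueness via the Neumann-series invertibility of $D_2\mathcal M(D^0,U^0_F)$ from \eqref{eq:qift_cond2} being more explicit than the paper's brief appeal to the implicit solution map.
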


\begin{proof}
The operator $\mathcal M$ in \eqref{eq:operator_M} is a polynomial in the state variables and is therefore $C^\infty$ between the indicated H\"older spaces.  The implicit function theorem used in Lemma~\ref{lem:well_posedness} gives the first assertion.  The normal trace is bounded from $C^{2,\gamma}(\overline\Omega)$ to $C^{1,\gamma}(\partial\Omega)$, so composition proves the second assertion.

For a fixed source $F$, set $U(t)=\mathcal S(F,f^0+th)$.  Differentiating the two interior equations and the boundary trace at $t=0$ gives \eqref{eq:variation_expanded}.  The derivative is unique because it is also the derivative supplied by the implicit solution map.
\end{proof}

\subsection{Weak solvability and complexification of the linearized map}

The classical derivative in Proposition~\ref{prop:dtn_differentiability} extends to the Sobolev trace spaces used to measure the data.  Let
$P_{\mathrm{free}}=\operatorname{diag}(-\Delta-\omega^2,-\Delta-4\omega^2)$ and denote its Dirichlet realization by
\begin{equation}\label{eq:free_matrix_operator}
P_D:=P_{\mathrm{free}}|_{(H_0^1(\Omega))^2}:
(H_0^1(\Omega))^2\longrightarrow(H^{-1}(\Omega))^2.
\end{equation}
The nonresonance assumption implies that $P_D$ is invertible.  Denote
\begin{equation}\label{eq:free_resolvent_constant}
C_D=\|P_D^{-1}\|_{\mathcal L((H^{-1})^2,(H_0^1)^2)}.
\end{equation}

\begin{lemma}[linearized Dirichlet problem]\label{lem:linearized_dirichlet}
There is a small-data radius, uniform in $F\in\mathcal A_{M,s}(K)$, for which the following statements hold.

\begin{enumerate}
\item For every $h\in(H^{1/2}(\partial\Omega;\R))^2$, problem \eqref{eq:linearized_system} has a unique weak solution $V\in(H^1(\Omega;\R))^2$ satisfying
\begin{equation}\label{eq:linearized_h1_bound}
\|V\|_{(H^1(\Omega))^2}\le C\|h\|_{(H^{1/2}(\partial\Omega))^2}.
\end{equation}
\item The weak normal derivative defines a bounded real-linear operator
\begin{equation}\label{eq:dtn_sobolev_mapping}
\Nmap_F:(H^{1/2}(\partial\Omega;\R))^2
\longrightarrow(H^{-1/2}(\partial\Omega;\R))^2.
\end{equation}
On smooth boundary data this operator agrees with $D\Lambda_{a,F}(f^0)$ from Proposition~\ref{prop:dtn_differentiability}.
\item The complexification of \eqref{eq:dtn_sobolev_mapping} is the Dirichlet-to-Neumann map of the complexified matrix equation.  With the bilinear duality pairing it is transpose symmetric:
\begin{equation}\label{eq:dtn_transpose_symmetry}
\ip{\Nmap_Fg}{h}_{\partial\Omega}
=\ip{g}{\Nmap_Fh}_{\partial\Omega},
\qquad g,h\in(H^{1/2}(\partial\Omega;\C))^2.
\end{equation}
\end{enumerate}
\end{lemma}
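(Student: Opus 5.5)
The plan is a perturbation argument around the invertible free operator $P_D$. First I reduce to homogeneous boundary data. Let $E:(H^{1/2}(\partial\Omega))^2\to(H^1(\Omega))^2$ be a bounded right inverse of the trace and write $V=Eh+W$ with $W\in(H_0^1(\Omega))^2$. Then \eqref{eq:linearized_system} becomes
\[
(P_D+\mathbb Q_F)W=-(P_{\mathrm{free}}+\mathbb Q_F)Eh\in(H^{-1}(\Omega))^2 .
\]
Lemma~\ref{lem:well_posedness} and \eqref{eq:apriori_est}, applied with data $(F,f^0)$ whose $X$-norm is at most $\varepsilon_*$, give $\|U_F^0\|_{(C^{2,\gamma})^2}\le C\varepsilon_*$. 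Hence multiplication by $\mathbb Q_F$ maps $(H_0^1)^2\to(L^2)^2\hookrightarrow(H^{-1})^2$ with norm at most $C_Q\|a\|_{C^0}\varepsilon_*$. Decreasing $\varepsilon_*$ once, uniformly in $F\in\mathcal A_{M,s}(K)$, so that $C_DC_Q\|a\|_{C^0}\varepsilon_*\le1/2$, the Neumann series gives
\[
\|(P_D+\mathbb Q_F)^{-1}\|\le 2C_D .
\]
This yields existence, uniqueness and \eqref{eq:linearized_h1_bound}; uniqueness of $V$ is independent of the choice of $E$, since the difference of two solutions lies in $(H_0^1)^2$. This is the step where the radius must be uniform, and that uniformity holds because only the $C^0$ size of the background, controlled by $\varepsilon_*$, enters.

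Second, I define $\Nmap_Fh$ weakly. For $g\in(H^{1/2}(\partial\Omega))^2$ and any $\phi\in(H^1)^2$ with trace $g$, set
\[
\ip{\Nmap_Fh}{g}_{\partial\Omega}=\int_\Omega\bigl(\nabla V:\nabla\phi+\operatorname{diag}(-\omega^2,-4\omega^2)V\cdot\phi+\mathbb Q_FV\cdot\phi\bigr)\dd x .
\]
The weak equation makes this independent of the extension $\phi$. It is bounded by $C\|h\|\,\|g\|$ by \eqref{eq:linearized_h1_bound}, and it is real-linear because real data produce real $V$ by uniqueness. For $h\in(C^{2,\gamma})^2$, the classical solution from Proposition~\ref{prop:dtn_differentiability} is also a weak solution, so it coincides with $V$ by uniqueness. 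Green's formula then identifies the pairing with $\partial_\nu V$, which gives agreement with $D\Lambda_{a,F}(f^0)$.

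Third, I complexify. Since $\mathbb Q_F$ is real, the operator $P_D+\mathbb Q_F$ acts componentwise on real and imaginary parts. The complex Dirichlet problem with data $h=h_1+ih_2$ is therefore solved by $V_1+iV_2$, and its weak DtN map equals the complexification $\Nmap_Fh_1+i\Nmap_Fh_2$. For transpose symmetry, I take $\phi$ to be the solution $V_g$ with data $g$ in the bilinear (non-conjugated) form above. The integrand is then symmetric in $(V_h,V_g)$, because the diagonal part and $\mathbb Q_F$ are symmetric matrices. This gives $\ip{\Nmap_Fh}{g}=\ip{h}{\Nmap_Fg}$, which is \eqref{eq:dtn_transpose_symmetry}. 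The only point needing care is that the pairing must be bilinear rather than sesquilinear, which is exactly how it is stated.
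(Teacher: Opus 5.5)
Your proposal is correct and follows essentially the same route as the paper. You lift $h$ by a bounded right inverse of the trace and invert $P_D+\mathbb Q_F$ on $(H_0^1(\Omega))^2$ by a Neumann series, with a smallness condition on $\varepsilon_*$ that is uniform because only $\|\mathbb Q_F\|_{L^\infty}\le C\varepsilon_*$ enters. You then define $\Nmap_F$ through the weak conormal identity and get transpose symmetry from the symmetric bilinear form with $\mathbb Q_F^T=\mathbb Q_F$. The one difference is on smooth data: you identify $\Nmap_F$ with $D\Lambda_{a,F}(f^0)$ directly, since the classical $C^{2,\gamma}$ variation is a weak solution, so weak uniqueness plus Green's formula suffices; the paper instead cites density and uniqueness.
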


\begin{proof}
From \eqref{eq:apriori_est}, the matrix potential in \eqref{eq:linearized_system} obeys
\begin{equation}\label{eq:Q_small_bound}
\|\mathbb Q_F\|_{L^\infty(\Omega)}
\le C\|a\|_{L^\infty}
\big(\|u^0_{\omega,F}\|_{L^\infty}
+\|u^0_{2\omega,F}\|_{L^\infty}\big)
\le C\varepsilon_*.
\end{equation}
Multiplication by $\mathbb Q_F$ maps $(H_0^1)^2$ to $(H^{-1})^2$ with norm at most $C\|\mathbb Q_F\|_\infty$.  Decrease $\varepsilon_*$ until
\begin{equation}\label{eq:linearized_neumann_smallness}
C_D\|\mathbb Q_F\|_{\mathcal L((H_0^1)^2,(H^{-1})^2)}\le\frac12.
\end{equation}
Then
\[
P_D+\mathbb Q_F
=P_D\big(I+P_D^{-1}\mathbb Q_F\big)
\]
is invertible on the zero-boundary space by a Neumann series, with a uniform inverse norm.

For general $h$, choose a bounded right inverse of the trace,
$\mathcal Eh\in(H^1(\Omega))^2$, and write $V=\mathcal Eh+Z$ with $Z\in(H_0^1)^2$.  Solving
\[
(P_D+\mathbb Q_F)Z=-(P_{\mathrm{free}}+\mathbb Q_F)\mathcal Eh
\]
gives existence and \eqref{eq:linearized_h1_bound}.  Uniqueness follows from the zero-boundary invertibility.

For a weak solution $V$, its conormal derivative is defined by
\begin{equation}\label{eq:weak_normal_derivative}
\ip{\partial_\nu V}{\varphi}_{\partial\Omega}
=\int_\Omega
\big(\nabla V:\nabla\Phi-\mathsf{K}V\cdot\Phi
+\mathbb Q_FV\cdot\Phi\big)\dd x,
\end{equation}
where $\Phi\in(H^1(\Omega))^2$ has trace $\varphi$ and
$\mathsf{K}=\operatorname{diag}(\omega^2,4\omega^2)$.  The equation makes the right-hand side independent of the chosen extension.  Estimate \eqref{eq:linearized_h1_bound} proves the mapping property \eqref{eq:dtn_sobolev_mapping}.  Density and uniqueness identify it with the classical derivative on smooth data.

Finally, a bounded real-linear operator has a unique complexification.  The coefficients in the complexified PDE remain the same real coefficients.  Applying Green's formula to two complex solutions and using $\mathbb Q_F^T=\mathbb Q_F$ proves \eqref{eq:dtn_transpose_symmetry}.  This is a bilinear symmetry and does not insert complex conjugation.
\end{proof}

\begin{remark}\label{rem:common_linearization_state}
The use of one common state $f^0$ is essential.  If the two derivatives were taken at different boundary values, the data discrepancy would combine the change of the source with the change of the linearization point, and the Alessandrini identity in Section~\ref{sec:background_stability} would not isolate $\mathbb Q_2-\mathbb Q_1$.
\end{remark}

\section{Stability of the Background Fields}\label{sec:background_stability}

For $j=1,2$, let
$U_j^0=(u^0_{\omega,j},u^0_{2\omega,j})$ be the small background
solution corresponding to $F_j$ and the common boundary value $f^0$.
Define
\begin{equation}\label{eq:Qj_definition}
\mathsf K=\begin{pmatrix}\omega^2&0\\0&4\omega^2\end{pmatrix},
\qquad
\mathbb Q_j=2a
\begin{pmatrix}
u^0_{2\omega,j}&u^0_{\omega,j}\\
u^0_{\omega,j}&0
\end{pmatrix},
\qquad
\mathbb V_j=\mathbb Q_j-\mathsf K.
\end{equation}
The linearized equation can then be written as
\begin{equation}\label{eq:absorbed_energy_operator}
\mathcal L_j W:=(-\Delta I+\mathbb V_j)W=0.
\end{equation}
Thus the two known Helmholtz energies are part of one zeroth-order matrix
potential.  Moreover,
\begin{equation}\label{eq:potential_difference_identity}
\mathbb V_2-\mathbb V_1=\mathbb Q_2-\mathbb Q_1
\quad\text{in }\Omega.
\end{equation}
The matrices $\mathbb V_j$ are real symmetric and have a uniform
$C^\gamma(\overline\Omega)$ bound.

\begin{lemma}\label{lem:linearized_stability}
Let $0<r<3/2$ and put $\sigma_r=2r/(3+2r)$.  If
\[
\delta=\|\Nmap_{F_2}-\Nmap_{F_1}\|_*,
\]
then, for $0<\delta<\delta_0$,
\begin{equation}\label{eq:background_stability}
\|u^0_{\omega,2}-u^0_{\omega,1}\|_{L^2(\Omega)}+
\|u^0_{2\omega,2}-u^0_{2\omega,1}\|_{L^2(\Omega)}
\le C|\log\delta|^{-\sigma_r}.
\end{equation}
\end{lemma}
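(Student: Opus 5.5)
The approach is the classical Alessandrini--CGO argument applied to the matrix Schr\"odinger operators $\mathcal L_j=-\Delta I+\mathbb V_j$, followed by a low/high frequency splitting. The proof has four steps.

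\emph{Step 1: Alessandrini identity.} Let $W_1,W_2\in(H^1(\Omega;\C))^2$ solve $\mathcal L_1W_1=0$ and $\mathcal L_2W_2=0$. By Lemma~\ref{lem:linearized_dirichlet}(3) and Green's formula with the bilinear pairing and the symmetry of $\mathbb V_j$,
\[
\int_\Omega (\mathbb Q_2-\mathbb Q_1)W_2\cdot W_1\dd x
=\ip{(\Nmap_{F_2}-\Nmap_{F_1})W_2|_{\partial\Omega}}{W_1|_{\partial\Omega}}_{\partial\Omega}.
\]
Here \eqref{eq:potential_difference_identity} is used so that the energies $\mathsf K$ cancel. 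Consequently the left side is bounded by $\delta\|W_1\|_{H^{1/2}(\partial\Omega)}\|W_2\|_{H^{1/2}(\partial\Omega)}$. Complexification does not change the operator norm.

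\emph{Step 2: CGO solutions and polarization.} For $\xi\in\R^3$ and $\tau\ge\tau_0$, choose unit vectors $\eta\perp\xi$ and $\mu\perp\xi,\eta$. Set
\[
\zeta_{1,2}=\tau\eta\pm i\Bigl(\tfrac{\xi}{2}\pm\sqrt{\tau^2-|\xi|^2/4}\,\mu\Bigr)
\]
(with appropriate signs), so that $\zeta_j\cdot\zeta_j=0$ and $\zeta_1+\zeta_2=-i\xi$. Sylvester--Uhlmann theory in $L^2_{\delta_w}$ then applies to the extended matrix potential $\mathbb V_j$, whose bounded $L^\infty$ norm (which includes $\mathsf K$) is harmless once $\tau$ exceeds a uniform threshold. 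This gives $W_j=\e^{\zeta_j\cdot x}(p_j+\Phi_j)$ with constant vectors $p_j\in\{e_1,e_2\}$, $\|\Phi_j\|_{L^2}\le C\tau^{-1}$ and $\|\Phi_j\|_{H^1}\le C$. Inserting these into Step~1 and using Lemma~\ref{lem:cgo_trace} yields
\[
|p_2\cdot\widehat{(\mathbb Q_2-\mathbb Q_1)}(\xi)p_1|\le C\bigl(\tau^2\e^{2R\tau}\delta+\tau^{-1}\bigr).
\]
The choice $p_1=p_2=e_2$ gives nothing, since that entry is $0$. The choices $p_1=p_2=e_1$ and $(p_1,p_2)=(e_1,e_2)$ give the Fourier transforms of $2a(u^0_{2\omega,2}-u^0_{2\omega,1})$ and $2a(u^0_{\omega,2}-u^0_{\omega,1})$, extended by zero. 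These are the two entries containing the background fields.

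\emph{Step 3: Frequency splitting.} Set $w=u^0_{\cdot,2}-u^0_{\cdot,1}$. It lies in $H^2\cap H^1_0$ because the boundary value $f^0$ is common, and its $H^2$ norm is uniformly bounded by \eqref{eq:apriori_est}. By Lemma~\ref{lem:zero_extension} with $b=2a$, $\|E_0(2aw)\|_{H^r(\R^3)}\le C$. Split Plancherel at a radius $\rho$:
\[
\|E_0(2aw)\|_{L^2}^2\le C\rho^3\bigl(\tau^2\e^{2R\tau}\delta+\tau^{-1}\bigr)^2+C\rho^{-2r}.
\]
Take $\tau=c|\log\delta|$ so that $\tau^2\e^{2R\tau}\delta\le\tau^{-1}$. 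Balancing $\rho^3\tau^{-2}=\rho^{-2r}$ gives $\rho=\tau^{2/(3+2r)}$, so the right side is $O(\tau^{-4r/(3+2r)})$ and hence $\|E_0(2aw)\|_{L^2}\le C|\log\delta|^{-\sigma_r}$. Dividing by $2a\ge2a_0$ recovers $\|w\|_{L^2}$.

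\emph{Main obstacle.} I expect the CGO construction to be the hardest step. The remainder bounds must be uniform in $\xi$ over the range needed, which here is $|\xi|\le\rho\ll\tau$, and in $F$. Because $\mathsf K$ is absorbed into $\mathbb V_j$, the potential is not small, so $\tau_0$ must be chosen via $\|\mathbb V_j\|_\infty\le\omega^2\cdot4+C\varepsilon_*$. I would handle this by extending $\mathbb V_j$ by zero outside $\Omega$ and applying the standard weighted estimate $\|G_\zeta\|_{L^2_{\delta_w+1}\to L^2_{\delta_w}}\le C/|\zeta|$ componentwise, since $\Delta$ acts diagonally. A Neumann series then converges for $\tau\ge\tau_0(\|\mathbb V\|_\infty)$, and the $H^1$ bound follows in the usual way.
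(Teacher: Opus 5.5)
Your proposal is correct and follows essentially the same route as the paper: the bilinear Alessandrini identity for $-\Delta I+\mathbb V_j$ with the Helmholtz energies absorbed into the potential; null CGO solutions built by a Neumann series whose threshold $\tau_0$ comes from the uniform $L^\infty$ bound on $\mathbb V_j$; polarization with $(e_1,e_1)$ and $(e_1,e_2)$ to isolate $(\Delta\mathbb Q)_{11}$ and $(\Delta\mathbb Q)_{21}$; and the low/high frequency split with $\tau\sim|\log\delta|$, radius $\tau^{2/(3+2r)}$, and the zero-extension bound below $3/2$. The only slip is in your displayed phases: their real parts must be $\pm\tau\eta$, as in the paper's $\zeta_1=\tau\eta+i(\alpha\theta-\xi/2)$ and $\zeta_2=-\tau\eta+i(-\alpha\theta-\xi/2)$, because a common real part $\tau\eta$ is incompatible with $\zeta_1+\zeta_2=-i\xi$.
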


\subsection{A bilinear Alessandrini identity}

For complex vectors we use the bilinear product
$z\cdot w=\sum_mz_mw_m$, without complex conjugation.  The boundary
duality pairing is interpreted in the same bilinear sense.

\begin{lemma}\label{lem:alessandrini_identity}
Let $W_j\in H^1(\Omega;\C^2)$ satisfy $\mathcal L_jW_j=0$ and let
$g_j=W_j|_{\partial\Omega}$.  Then
\begin{equation}\label{eq:Alessandrini_identity_final}
\int_\Omega(\mathbb Q_2-\mathbb Q_1)W_1\cdot W_2\,dx
=\langle(\Nmap_{F_2}-\Nmap_{F_1})g_1,g_2\rangle_{\partial\Omega}.
\end{equation}
\end{lemma}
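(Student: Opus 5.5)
The plan is the standard Green's-formula argument applied to the bilinear weak normal derivative \eqref{eq:weak_normal_derivative}, complexified. We view the statement as a consequence of two weak formulations tested against each other's solutions.

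First, we introduce an auxiliary solution $\wt W_2\in H^1(\Omega;\C^2)$ of $\mathcal L_2\wt W_2=0$ with $\wt W_2|_{\partial\Omega}=g_1$. It exists and is unique by Lemma~\ref{lem:linearized_dirichlet}(1), complexified: $\mathcal L_2$ differs from $P_{\mathrm{free}}+\mathbb Q_2$ only by notation, since $\mathbb V_2=\mathbb Q_2-\mathsf K$. By definition $\Nmap_{F_2}g_1=\partial_\nu\wt W_2$ and $\Nmap_{F_1}g_1=\partial_\nu W_1$.

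Next, we use \eqref{eq:weak_normal_derivative} with test extension $W_2$, whose trace is $g_2$. This gives
\[
\ip{\Nmap_{F_2}g_1}{g_2}_{\partial\Omega}=\int_\Omega\big(\nabla\wt W_2:\nabla W_2+\mathbb V_2\wt W_2\cdot W_2\big)\dd x,
\]
\[
\ip{\Nmap_{F_1}g_1}{g_2}_{\partial\Omega}=\int_\Omega\big(\nabla W_1:\nabla W_2+\mathbb V_1 W_1\cdot W_2\big)\dd x.
\]
These are legitimate because the right-hand side of \eqref{eq:weak_normal_derivative} is independent of the chosen $H^1$ extension of the test trace. After complexification everything is bilinear, with no conjugation.

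Then we remove $\wt W_2$ using the equation for $W_2$. Because $\mathbb V_2$ is symmetric, $\mathbb V_2\wt W_2\cdot W_2=\mathbb V_2W_2\cdot\wt W_2$. Moreover $\wt W_2-W_1\in H_0^1(\Omega;\C^2)$, since both have trace $g_1$. Testing the weak form of $\mathcal L_2W_2=0$ against $\wt W_2-W_1$ therefore gives
\[
\int_\Omega\big(\nabla W_2:\nabla\wt W_2+\mathbb V_2W_2\cdot\wt W_2\big)\dd x=\int_\Omega\big(\nabla W_2:\nabla W_1+\mathbb V_2W_2\cdot W_1\big)\dd x.
\]
Subtracting the two boundary pairings, the gradient terms cancel. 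What remains is
\[
\int_\Omega(\mathbb V_2-\mathbb V_1)W_1\cdot W_2\dd x,
\]
where we have used the symmetry of $\mathbb V_2$ once more. By \eqref{eq:potential_difference_identity} this equals $\int_\Omega(\mathbb Q_2-\mathbb Q_1)W_1\cdot W_2\,dx$, which is \eqref{eq:Alessandrini_identity_final}.

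The only delicate point is bookkeeping rather than analysis. We must check that the bilinear (non-conjugated) pairing is used consistently, so that symmetry of the real matrices, and not Hermitian symmetry, is what is needed. We must also check that the complexified map $\Nmap_{F_j}$ is indeed the DtN map of the complexified equation, as stated in Lemma~\ref{lem:linearized_dirichlet}(3). All integrals are finite because $\mathbb V_j\in L^\infty$ and $W_j\in H^1$.
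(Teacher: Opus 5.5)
Your proposal is correct and is essentially the paper's argument. The paper applies Green's formula with $W_1$ and $W_2$ as mutual test extensions to obtain $\langle g_1,\Nmap_{F_2}g_2\rangle_{\partial\Omega}-\langle\Nmap_{F_1}g_1,g_2\rangle_{\partial\Omega}$, and then cites the transpose symmetry \eqref{eq:dtn_transpose_symmetry}; your auxiliary solution $\wt W_2$ with trace $g_1$, tested against $\wt W_2-W_1\in H_0^1$, simply reproves that symmetry inline instead of citing it.
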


\begin{proof}
By \eqref{eq:potential_difference_identity}, the left-hand side is
$\int_\Omega(\mathbb V_2-\mathbb V_1)W_1\cdot W_2\,dx$.
Green's formula and $\mathcal L_jW_j=0$ give
\begin{align*}
\int_\Omega(\mathbb V_2-\mathbb V_1)W_1\cdot W_2\,dx
&=\langle g_1,\Nmap_{F_2}g_2\rangle_{\partial\Omega}
-\langle\Nmap_{F_1}g_1,g_2\rangle_{\partial\Omega}.
\end{align*}
Since $\mathbb V_2^T=\mathbb V_2$, its Dirichlet-to-Neumann map is
transpose symmetric with respect to the bilinear pairing.  Hence the
first boundary term equals
$\langle\Nmap_{F_2}g_1,g_2\rangle_{\partial\Omega}$, which proves the
identity.  No Hermitian pairing or complex conjugation is used.
\end{proof}

\subsection{Standard Null CGO Solutions for the Matrix System}

Fix $-1<\delta_w<0$.  We use the classical scalar weighted resolvent
estimates that enter the proof of Proposition~2.1 in
\cite{SylvesterUhlmann1987}; see also
\cite{IsakovBook}.  If
$\zeta\in\C^3$ satisfies
\begin{equation}\label{eq:null_phase_condition}
\zeta\cdot\zeta=0,
\qquad |\operatorname{Re}\zeta|=\tau\ge1,
\end{equation}
let $G_\zeta$ denote the distributional Fourier multiplier
\begin{equation}\label{eq:null_faddeev_multiplier}
\widehat{G_\zeta f}(\xi)
=\frac{\widehat f(\xi)}{|\xi|^2-2i\zeta\cdot\xi}.
\end{equation}

\begin{lemma}[weighted Faddeev estimate]\label{lem:null_faddeev}
There is $C>0$ such that every phase satisfying
\eqref{eq:null_phase_condition} obeys
\begin{align}
\|G_\zeta f\|_{L^2_{\delta_w}}
&\le C\tau^{-1}\|f\|_{L^2_{\delta_w+1}},
\label{eq:null_faddeev_l2}\\
\|G_\zeta f\|_{H^1_{\delta_w}}
&\le C\|f\|_{L^2_{\delta_w+1}}.
\label{eq:null_faddeev_h1}
\end{align}
The same estimates hold for $\C^2$-valued $f$, with $G_\zeta$ acting
componentwise.
\end{lemma}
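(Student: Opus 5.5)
Write $m(\xi)=|\xi|^2-2i\zeta\cdot\xi$. The plan is to reduce the estimate to the classical Sylvester--Uhlmann weighted bound for $\Delta_\zeta:=\Delta+2\zeta\cdot\nabla$. On Fourier transforms, $-\Delta_\zeta$ acts as multiplication by $m(\xi)$, so $G_\zeta=(-\Delta_\zeta)^{-1}$. Since $\zeta\cdot\zeta=0$, write $\zeta=\tau(e_1+ie_2)$ with $e_1,e_2$ orthonormal. Then
\[
m(\xi)=|\xi|^2+2\tau\,\xi\cdot e_2-2i\tau\,\xi\cdot e_1 ,
\]
whose zero set is the circle $\{\xi\cdot e_1=0,\ |\xi+\tau e_2|=\tau\}$ of radius $\tau$. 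We first rotate so that $e_1,e_2$ are coordinate directions; this leaves both weighted norms unchanged because $\langle x\rangle$ is rotation invariant. Then we rescale $x\mapsto \tau x$. This reduces the problem to $\tau=1$, up to the standard bookkeeping of the weights $\langle x\rangle^{\delta_w}$ and $\langle x\rangle^{\delta_w+1}$ under dilation. For $\tau\ge1$ and $-1<\delta_w<0$, this bookkeeping is exactly what yields the gain $\tau^{-1}$ in \eqref{eq:null_faddeev_l2}.

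For the $L^2$ bound at $\tau=1$, I will follow the proof of Proposition~2.1 in \cite{SylvesterUhlmann1987}. We split $\widehat f$ by a partition of unity in $\xi_1$, with the multiplier having imaginary part $-2\xi_1$. Where $|\xi_1|\gtrsim1$, the symbol satisfies $|m|\gtrsim1$ and is smooth, so the weighted bound follows from Fourier multiplier estimates. Near $\xi_1=0$, we freeze the transverse variables and use the one-dimensional estimate for $(\xi_1+i\,\cdot)^{-1}$, which is a Hilbert-transform-type bound. That estimate holds in $L^2_{\delta}\to L^2_{\delta+1}$ duality precisely when $-1<\delta_w<0$, and it is the source of the weight loss of one power of $\langle x\rangle$. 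This is the main obstacle, namely the careful weighted one-dimensional estimate combined with the transverse bounded operators. Since the statement says we may invoke it, I would cite it rather than reprove it.

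For the $H^1$ bound, we use $|\xi|/|m(\xi)|\lesssim \tau^{-1}|\xi|\,|m|^{-1}\tau$ near the characteristic circle, where $|\xi|\sim\tau$. This gives $\|\nabla G_\zeta f\|_{L^2_{\delta_w}}\lesssim \tau\cdot\tau^{-1}\|f\|=\|f\|_{L^2_{\delta_w+1}}$. Away from the circle, $|\xi|/|m|\lesssim |\xi|^{-1}$ for large $\xi$, which is bounded. The gradient commutes with the multiplier, and commuting with the weight produces only lower-order terms already controlled by the $L^2$ estimate. Combining this with \eqref{eq:null_faddeev_l2} and $\tau\ge1$ gives \eqref{eq:null_faddeev_h1}.

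The $\C^2$-valued case is immediate: $G_\zeta$ is a scalar multiplier acting on each component, and the norms are Euclidean, so we apply the scalar estimate componentwise and sum. The constant $C$ depends only on $\delta_w$, since rotation invariance removes any dependence on the direction of $\zeta$.
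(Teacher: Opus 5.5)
Like the paper, which states this lemma without proof and simply quotes the Sylvester--Uhlmann weighted resolvent estimates, you rest the core bound on that citation. The reductions you build around it are essentially right. The problem is the mechanism you describe for the $\tau=1$ estimate: it would fail if you tried to carry it out.

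There is no bound $\|\langle x_1\rangle^{\delta}(D_1+i\lambda)^{-1}h\|_{L^2(\R)}\le C\|\langle x_1\rangle^{\delta+1}h\|_{L^2(\R)}$, with $D_1=-i\partial_{x_1}$, that is uniform in $\lambda\ne0$, for \emph{any} $\delta$. As $\lambda\to0^{\pm}$ the kernel tends to a Heaviside function, so the operator tends to $h\mapsto c\int_t^\infty h$ or $h\mapsto c\int_{-\infty}^t h$. Hardy's inequality then needs $\delta>-1/2$ on one half-line and $\delta<-1/2$ on the other.

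Freezing the transverse frequencies $\xi'$ amounts to using weights in $x_1$ alone. For $\xi'$ near the characteristic circle, the frozen symbol is a nonzero multiple of $(\xi_1+i\lambda)^{-1}$ plus a harmless part, with $\lambda\to0^{\pm}$. So the three-dimensional estimate with $\langle x_1\rangle$-weights is false too. One-dimensional Hardy arguments only give the Agmon--H\"ormander version $L^2_{\delta}\to L^2_{-\delta}$ with $\delta>1/2$, which loses more than one power.

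The sharp loss of exactly one power for $-1<\delta_w<0$ comes from the codimension-two geometry. One has $m(\xi)=-2i\tau(\xi\cdot e_1+i\rho)$ with $\rho=(|\xi+\tau e_2|^2-\tau^2)/(2\tau)$. Near the circle, $1/m$ is therefore the symbol of a two-dimensional Cauchy transform in the normal variables $(\xi\cdot e_1,\rho)$. The interval $-1<\delta_w<0$ is exactly the Stein--Weiss range in which that transform maps $L^2_{\delta_w+1}(\R^2)$ to $L^2_{\delta_w}(\R^2)$, and the weight in both normal directions is essential. So drop this heuristic and simply cite the estimate, as the paper does.

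The rest of your argument is fine. With $u(x)=v(\tau x)$, $f(x)=\tau^2g(\tau x)$ and $\zeta'=\zeta/\tau$, one gets $v=G_{\zeta'}g$. After the powers of $\tau$ cancel, \eqref{eq:null_faddeev_l2} is equivalent to
\[
\int_{\R^3}(\tau^2+|y|^2)^{\delta_w}|v|^2\,dy\le C\int_{\R^3}(\tau^2+|y|^2)^{\delta_w+1}|g|^2\,dy .
\]
This follows from the case $\tau=1$, because $(\tau^2+|y|^2)^{\delta_w}\le\langle y\rangle^{2\delta_w}$ and $\langle y\rangle^{2\delta_w+2}\le(\tau^2+|y|^2)^{\delta_w+1}$ when $\tau\ge1$ and $-1<\delta_w<0$.

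The gradient term in \eqref{eq:null_faddeev_h1} reduces in the same way to the $\tau$-free inequality with $|\nabla v|^2$ on the left. So it too only needs to be proved at $\tau=1$, where it follows from the $L^2$ bound with one fixed cutoff $\chi\in C_c^\infty$, equal to $1$ on $|\xi|\le4$:
\begin{itemize}
\item at low frequencies, $\nabla G_{\zeta'}\chi(D)g=G_{\zeta'}\bigl(\nabla\chi(D)g\bigr)$, and $\nabla\chi(D)$ is a Schwartz convolution bounded on $L^2_{\delta_w+1}$;
\item at high frequencies, $\xi(1-\chi(\xi))/m(\xi)$ is a symbol of order $-1$, bounded on every weighted $L^2$ space.
\end{itemize}
This is cleaner than your $\tau$-dependent splitting. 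That splitting also claims $|\xi|\sim\tau$ near the characteristic circle, which is false near $\xi=0$, since the origin lies on the circle. Only $|\xi|\lesssim\tau$ holds there, and that is all you need.
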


Choose a smooth bounded domain $\Omega_1$ with
$\overline\Omega\subset\Omega_1\Subset\R^3$.  Extend the independent
entries of $\mathbb V_j$, restore symmetry, and apply a fixed cutoff.
This gives compactly supported symmetric extensions
$\widetilde{\mathbb V}_j$ such that
\begin{equation}\label{eq:V_extension_bound}
\widetilde{\mathbb V}_j|_\Omega=\mathbb V_j,
\qquad
\supp\widetilde{\mathbb V}_j\subset\overline\Omega_1,
\qquad
\|\widetilde{\mathbb V}_j\|_{L^\infty(\R^3)}\le C.
\end{equation}
On this fixed compact support,
\begin{equation}\label{eq:compact_multiplier_weighted}
\|\widetilde{\mathbb V}_jZ\|_{L^2_{\delta_w+1}}
\le C\|\widetilde{\mathbb V}_j\|_{L^\infty}
\|Z\|_{L^2_{\delta_w}}.
\end{equation}

\begin{lemma}[matrix CGO solutions]\label{lem:matrix_cgo}
There exist $\tau_0,C>0$, uniform over the admissible sources, with the
following property.  If $|p|\le1$ and $\zeta$ satisfies
\eqref{eq:null_phase_condition} with $\tau\ge\tau_0$, then
$\mathcal L_jW=0$ in $\Omega$ has a solution
\begin{equation}\label{eq:matrix_CGO}
W(x)=\e^{\zeta\cdot x}(p+\Phi(x))
\end{equation}
such that
\begin{equation}\label{eq:Phi_decay_bounds}
\|\Phi\|_{L^2(\Omega)}\le C\tau^{-1},
\qquad
\|\Phi\|_{H^1(\Omega)}\le C.
\end{equation}
\end{lemma}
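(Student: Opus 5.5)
We look for $W$ of the form \eqref{eq:matrix_CGO} on all of $\R^3$, using the compactly supported symmetric extension $\widetilde{\mathbb V}_j$ from \eqref{eq:V_extension_bound}. Since $\zeta\cdot\zeta=0$, we have
\[
(-\Delta)\big(\e^{\zeta\cdot x}\Psi\big)=\e^{\zeta\cdot x}\big(-\Delta\Psi-2\zeta\cdot\nabla\Psi\big).
\]
Hence $(-\Delta I+\widetilde{\mathbb V}_j)(\e^{\zeta\cdot x}(p+\Phi))=0$ on $\R^3$ is equivalent to
\[
(-\Delta-2\zeta\cdot\nabla)\Phi=-\widetilde{\mathbb V}_j(p+\Phi).
\]
The symbol of $-\Delta-2\zeta\cdot\nabla$ is $|\xi|^2-2i\zeta\cdot\xi$. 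So we will solve the integral equation
\[
\Phi=-G_\zeta\big(\widetilde{\mathbb V}_jp\big)-G_\zeta\big(\widetilde{\mathbb V}_j\Phi\big)
\]
in $L^2_{\delta_w}(\R^3;\C^2)$, with $G_\zeta$ acting componentwise as in \eqref{eq:null_faddeev_multiplier}. The matrix structure causes no difficulty here: $G_\zeta$ is scalar and diagonal, and the coupling sits entirely in the bounded multiplier $\widetilde{\mathbb V}_j$, which carries the known energies $\mathsf K$.

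The first step is a contraction. Set $T\Phi=-G_\zeta(\widetilde{\mathbb V}_j\Phi)$. Lemma~\ref{lem:null_faddeev}, \eqref{eq:null_faddeev_l2}, together with \eqref{eq:compact_multiplier_weighted} and \eqref{eq:V_extension_bound}, gives
\[
\|T\Phi\|_{L^2_{\delta_w}}\le C\tau^{-1}\|\widetilde{\mathbb V}_j\|_{L^\infty}\|\Phi\|_{L^2_{\delta_w}}\le C'\tau^{-1}\|\Phi\|_{L^2_{\delta_w}}.
\]
Here $C'$ depends only on the uniform $L^\infty$ bound for $\widetilde{\mathbb V}_j$. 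That bound holds uniformly over admissible sources by \eqref{eq:apriori_est} and the smallness of the data, since $\|\mathsf K\|$ is fixed and $\|\mathbb Q_j\|_\infty\le C\varepsilon_*$. We choose $\tau_0=2C'$; then $I-T$ is invertible on $L^2_{\delta_w}$ by a Neumann series, with inverse norm at most $2$.

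For the inhomogeneous term, $\widetilde{\mathbb V}_jp$ is bounded with compact support in $\overline\Omega_1$, so $\|\widetilde{\mathbb V}_jp\|_{L^2_{\delta_w+1}}\le C$. Therefore
\[
\|\Phi\|_{L^2_{\delta_w}}\le 2C\tau^{-1}.
\]
Next, inserting this into the fixed-point equation and applying the $H^1$ estimate \eqref{eq:null_faddeev_h1} gives
\[
\|\Phi\|_{H^1_{\delta_w}}\le C\|\widetilde{\mathbb V}_j(p+\Phi)\|_{L^2_{\delta_w+1}}\le C\big(1+\tau^{-1}\big)\le C.
\]
Because $\Omega$ is bounded, the weight $\langle x\rangle^{\delta_w}$ is bounded below on $\Omega$. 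Restricting to $\Omega$ therefore yields \eqref{eq:Phi_decay_bounds}.

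Finally, $\Phi$ solves the conjugated equation in the distributional sense on $\R^3$, since $G_\zeta$ inverts the symbol on the relevant class. Undoing the conjugation, $W$ solves $(-\Delta I+\widetilde{\mathbb V}_j)W=0$ on $\R^3$. Since $\widetilde{\mathbb V}_j=\mathbb V_j$ in $\Omega$, this means $\mathcal L_jW=0$ in $\Omega$, and $W\in H^1(\Omega;\C^2)$.

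The main obstacle I expect is uniformity: $\tau_0$ and $C$ must not depend on $F$ or on the direction of $\zeta$. This reduces to two facts. First, the constant in Lemma~\ref{lem:null_faddeev} depends only on $\delta_w$ and not on $\zeta$, which is the classical Sylvester--Uhlmann estimate. Second, the extension bound \eqref{eq:V_extension_bound} is uniform. Both are already available, so the main care needed is to verify that the extension operator used for $\widetilde{\mathbb V}_j$ is fixed and linear.
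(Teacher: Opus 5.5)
Your proposal is correct and follows the paper's proof essentially step by step. Both arguments conjugate using $\zeta\cdot\zeta=0$, solve $\Phi=-G_\zeta\widetilde{\mathbb V}_j(p+\Phi)$ by a Neumann series in $L^2_{\delta_w}$ via \eqref{eq:null_faddeev_l2} and \eqref{eq:compact_multiplier_weighted}, apply \eqref{eq:null_faddeev_h1} once for the $H^1$ bound, and restrict to $\Omega$ using the equivalence of weighted and unweighted norms there; your separate treatment of $G_\zeta(\widetilde{\mathbb V}_jp)$ and your uniformity remarks only spell out what the paper leaves implicit.
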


\begin{proof}
Because $\zeta\cdot\zeta=0$, conjugation gives
\[
\e^{-\zeta\cdot x}(-\Delta I+\widetilde{\mathbb V}_j)
\e^{\zeta\cdot x}(p+\Phi)
=(-\Delta-2\zeta\cdot\nabla)\Phi
+\widetilde{\mathbb V}_j(p+\Phi).
\]
Thus it suffices to solve
\begin{equation}\label{eq:null_cgo_correction}
\Phi=-G_\zeta\widetilde{\mathbb V}_j(p+\Phi),
\end{equation}
where $G_\zeta$ acts componentwise.  By
\eqref{eq:null_faddeev_l2} and
\eqref{eq:compact_multiplier_weighted},
\begin{equation}\label{eq:null_cgo_contraction}
\|G_\zeta\widetilde{\mathbb V}_j\|_{\mathcal L(L^2_{\delta_w})}
\le C\tau^{-1}\|\widetilde{\mathbb V}_j\|_{L^\infty}.
\end{equation}
The last norm is uniformly bounded.  For $\tau\ge\tau_0$, the right-hand
side is at most $1/2$, so \eqref{eq:null_cgo_correction} has a unique
Neumann-series solution and
\[
\|\Phi\|_{L^2_{\delta_w}}\le C\tau^{-1}.
\]
Using \eqref{eq:null_faddeev_h1} once in
\eqref{eq:null_cgo_correction} gives
$\|\Phi\|_{H^1_{\delta_w}}\le C$.  Weighted and unweighted norms are
equivalent on $\Omega$, proving \eqref{eq:Phi_decay_bounds}.  Finally,
the correction equation implies
$(-\Delta I+\widetilde{\mathbb V}_j)W=0$ in distributions on $\R^3$;
restriction to $\Omega$ proves the claim.
\end{proof}

\subsection{Common phase geometry and the Fourier estimate}

The absorption in \eqref{eq:absorbed_energy_operator} allows both
polarizations to use the same null characteristic variety.  In
particular, there is no singular phase construction near $\xi=0$.

\begin{lemma}[common null phases]\label{lem:common_null_phases}
Fix $0<c_1<2$.  If $\tau>0$ and $\xi\in\R^3$ satisfies
$|\xi|\le c_1\tau$, then there exist $\zeta_1,\zeta_2\in\C^3$ such that
\begin{equation}\label{eq:common_phase_relations}
\zeta_1+\zeta_2=-i\xi,
\qquad
\zeta_1\cdot\zeta_1=\zeta_2\cdot\zeta_2=0,
\end{equation}
and
\begin{equation}\label{eq:common_phase_sizes}
|\operatorname{Re}\zeta_1|=|\operatorname{Re}\zeta_2|=\tau,
\qquad
|\zeta_1|=|\zeta_2|=\sqrt2\,\tau.
\end{equation}
\end{lemma}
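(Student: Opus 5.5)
Since the statement is a finite-dimensional algebraic one, I will construct $\zeta_1,\zeta_2$ explicitly in an orthonormal frame adapted to $\xi$. First I dispose of the degenerate case $\xi=0$. There I take an orthonormal pair $e_1,e_2$ and set $\zeta_1=\tau(e_1+ie_2)$ and $\zeta_2=-\zeta_1$. Then $\zeta_1\cdot\zeta_1=\tau^2(1-1)=0$, the sum vanishes, $|\operatorname{Re}\zeta_j|=\tau$, and $|\zeta_j|^2=\tau^2+\tau^2=2\tau^2$.

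For $\xi\neq0$, I choose an orthonormal triple $e_1,e_2,e_3$ with $e_3=\xi/|\xi|$, and look for phases of the form
\[
\zeta_1=-\tfrac{i}{2}\xi+\tau e_1+i\beta e_2,
\qquad
\zeta_2=-\tfrac{i}{2}\xi-\tau e_1-i\beta e_2 ,
\]
with $\beta\ge0$ to be chosen. By construction $\zeta_1+\zeta_2=-i\xi$, and the real parts are $\pm\tau e_1$, so $|\operatorname{Re}\zeta_j|=\tau$ automatically. Because the three vectors $e_1,e_2,e_3$ are orthogonal, $\zeta_1\cdot\zeta_1=\tau^2-\beta^2-|\xi|^2/4$, with no cross terms. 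The same expression arises for $\zeta_2$, since $\zeta_2$ is obtained by flipping the signs of the $e_1,e_2$ components only. I therefore need $\beta^2=\tau^2-|\xi|^2/4$.

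This choice is legitimate precisely because of the hypothesis $|\xi|\le c_1\tau$ with $c_1<2$: it gives $\tau^2-|\xi|^2/4\ge\tau^2(1-c_1^2/4)>0$, so a real $\beta>0$ exists. For the modulus, the imaginary part of $\zeta_j$ is $\mp\beta e_2-\tfrac12\xi$, whose squared length is $\beta^2+|\xi|^2/4=\tau^2$. Hence $|\zeta_j|^2=\tau^2+\tau^2=2\tau^2$, which is \eqref{eq:common_phase_sizes}.

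There is no real obstacle in this argument. The only point requiring care is to keep the bilinear (non-conjugated) product in $\zeta\cdot\zeta$, so that the mixed terms vanish by orthogonality, and to note that the strict inequality $c_1<2$ is what makes $\beta$ real. The lower bound $\beta\ge\tau\sqrt{1-c_1^2/4}$ also keeps the construction uniform in $\xi$, which matters for later use with $\tau\ge\tau_0$ in Lemma~\ref{lem:matrix_cgo}.
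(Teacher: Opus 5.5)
Your proposal is correct and is the paper's own construction: your $(\tau e_1,\ \beta e_2,\ e_3=\xi/|\xi|)$ with $\beta=(\tau^2-|\xi|^2/4)^{1/2}$ correspond exactly to the paper's $\tau\eta$, $\alpha\theta$ and orthonormal frame $\{\xi/|\xi|,\eta,\theta\}$, including the choice $\zeta_2=-\tau\eta-i\alpha\theta-\tfrac{i}{2}\xi$. The vanishing of $\zeta_j\cdot\zeta_j$ by orthogonality and $|\operatorname{Im}\zeta_j|=\tau$ are verified the same way (and you treat $\xi=0$ the same way too). Only a minor nitpick: $c_1\le 2$ would already make $\beta$ real, so the strict inequality $c_1<2$ is what gives your uniform lower bound $\beta\ge\tau\sqrt{1-c_1^2/4}>0$.
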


\begin{proof}
For $\xi\ne0$, choose unit vectors $\eta,\theta$ so that
$\{\xi/|\xi|,\eta,\theta\}$ is orthonormal; for $\xi=0$, choose any
orthonormal $\eta,\theta$.  Put
\[
\alpha=\left(\tau^2-\frac{|\xi|^2}{4}\right)^{1/2}
\]
and define
\begin{equation}\label{eq:explicit_common_phases}
\zeta_1=\tau\eta+i\left(\alpha\theta-\frac{\xi}{2}\right),
\qquad
\zeta_2=-\tau\eta+i\left(-\alpha\theta-\frac{\xi}{2}\right).
\end{equation}
The square root is real because $c_1<2$.  Orthogonality gives
$|\alpha\theta\mp\xi/2|^2=\tau^2$, from which all the asserted
identities follow directly.
\end{proof}

\begin{lemma}[Fourier estimate]\label{lem:fourier_estimate}
Let $\delta=\|\Nmap_{F_2}-\Nmap_{F_1}\|_*$.  There are constants
$A,C>0$ such that, for every $\ell,m\in\{1,2\}$, every
$\tau\ge\tau_0$, and every $|\xi|\le c_1\tau$,
\begin{equation}\label{eq:fourier_bound_Q}
\left|\widehat{(\mathbb Q_2-\mathbb Q_1)_{m\ell}}(\xi)\right|
\le C\left(\tau^{-1}+\tau^2\e^{A\tau}\delta\right)
=:E(\tau,\delta).
\end{equation}
Here the coefficient in the Fourier transform is restricted to $\Omega$
and extended by zero.
\end{lemma}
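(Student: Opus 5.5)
We plug the matrix CGO solutions of Lemma~\ref{lem:matrix_cgo} into the bilinear Alessandrini identity of Lemma~\ref{lem:alessandrini_identity}, using the common null phases of Lemma~\ref{lem:common_null_phases}. Fix $\ell,m\in\{1,2\}$, $\tau\ge\tau_0$ and $|\xi|\le c_1\tau$. Let $\zeta_1,\zeta_2$ be the phases from Lemma~\ref{lem:common_null_phases}. Both satisfy \eqref{eq:null_phase_condition} with $|\operatorname{Re}\zeta_k|=\tau$. Take unit vectors $p_1=e_\ell$ and $p_2=e_m$ in $\C^2$. Lemma~\ref{lem:matrix_cgo}, applied to $\mathcal L_1$ with $(\zeta_1,p_1)$ and to $\mathcal L_2$ with $(\zeta_2,p_2)$, gives solutions
\[
W_1=\e^{\zeta_1\cdot x}(e_\ell+\Phi_1),\qquad W_2=\e^{\zeta_2\cdot x}(e_m+\Phi_2),
\]
with $\|\Phi_k\|_{L^2(\Omega)}\le C\tau^{-1}$ and $\|\Phi_k\|_{H^1(\Omega)}\le C$, uniformly over the admissible class. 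Since $\zeta_1+\zeta_2=-i\xi$, the product is $\e^{(\zeta_1+\zeta_2)\cdot x}=\e^{-ix\cdot\xi}$. Because $\mathbb Q_2-\mathbb Q_1$ is symmetric, $(\mathbb Q_2-\mathbb Q_1)e_\ell\cdot e_m=(\mathbb Q_2-\mathbb Q_1)_{m\ell}$.

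Expanding the left-hand side of \eqref{eq:Alessandrini_identity_final}, and writing $Q:=\mathbb Q_2-\mathbb Q_1$, we get
\[
\int_\Omega Q\,W_1\cdot W_2\dd x=\widehat{(E_0 Q_{m\ell})}(\xi)+\int_\Omega \e^{-ix\cdot\xi}\big(Q e_\ell\cdot\Phi_2+Q\Phi_1\cdot e_m+Q\Phi_1\cdot\Phi_2\big)\dd x .
\]
The entries of $Q$ are bounded by a uniform constant, by \eqref{eq:Q_small_bound} and \eqref{eq:apriori_est}, and $|\e^{-ix\cdot\xi}|=1$. By Cauchy--Schwarz on the bounded domain, the two linear remainder terms are each $O(\tau^{-1})$. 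The quadratic term is bounded by $C\|\Phi_1\|_{L^2}\|\Phi_2\|_{L^2}\le C\tau^{-2}$, which is also $O(\tau^{-1})$. Together these give the $C\tau^{-1}$ part of $E(\tau,\delta)$.

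For the right-hand side, the bilinear pairing on $H^{-1/2}\times H^{1/2}$ gives
\[
|\langle(\Nmap_{F_2}-\Nmap_{F_1})g_1,g_2\rangle|\le\delta\,\|g_1\|_{H^{1/2}(\partial\Omega)}\|g_2\|_{H^{1/2}(\partial\Omega)},
\]
where $\delta$ is the real operator norm. This uses the remark in Definition~\ref{def:linearized_DtN} that complexification preserves operator norms. Lemma~\ref{lem:cgo_trace} applies because $|\zeta_k|=\sqrt2\,\tau$. It bounds each trace by $C\tau\e^{R\tau}$, where $\Omega\subset B_R(0)$. Hence the right-hand side is at most $C\tau^2\e^{2R\tau}\delta$, and we take $A=2R$. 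Combining both sides yields \eqref{eq:fourier_bound_Q}.

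The delicate point is uniformity and the validity of the pairing. The $\zeta$-dependent CGO traces are complex, while $\delta$ is defined through real data. We must also make sure the constant $C$ in Lemma~\ref{lem:matrix_cgo} does not depend on $j$, $\xi$ or the source. This follows from the uniform $L^\infty$ bound \eqref{eq:V_extension_bound}. Beyond this, the argument is routine bookkeeping.
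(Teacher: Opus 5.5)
Your proposal is correct and follows the paper's proof essentially step by step: you insert the matrix CGO solutions with the common null phases $\zeta_1+\zeta_2=-i\xi$ into the bilinear Alessandrini identity, bound the remainder by $C\tau^{-1}$ via Cauchy--Schwarz, and bound the boundary term with the trace growth estimate to get $A=2R$. Two minor remarks: symmetry of $\mathbb Q_2-\mathbb Q_1$ is not actually needed for $(\mathbb Q_2-\mathbb Q_1)e_\ell\cdot e_m=(\mathbb Q_2-\mathbb Q_1)_{m\ell}$, and your note that complexification preserves the operator norm makes explicit a step the paper leaves implicit.
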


\begin{proof}
Fix $\ell,m$ and take the phases from
Lemma~\ref{lem:common_null_phases}.  Lemma~\ref{lem:matrix_cgo} supplies
solutions
\begin{align}\label{eq:two_cgo_solutions}
W_1&=\e^{\zeta_1\cdot x}(e_\ell+\Phi_1),
&\mathcal L_1W_1&=0,\nonumber\\
W_2&=\e^{\zeta_2\cdot x}(e_m+\Phi_2),
&\mathcal L_2W_2&=0.
\end{align}
Let $g_j=W_j|_{\partial\Omega}$ and
$\Delta\mathbb Q=\mathbb Q_2-\mathbb Q_1$.  By
\eqref{eq:common_phase_relations},
\begin{align}
\int_\Omega\Delta\mathbb QW_1\cdot W_2\,dx
&=\widehat{(\Delta\mathbb Q)_{m\ell}}(\xi)
+\mathcal R_{m\ell}(\xi),
\label{eq:expanded_alessandrini_integral}
\end{align}
where
\begin{align*}
\mathcal R_{m\ell}(\xi)=\int_\Omega\e^{-i\xi\cdot x}
\big(&\Delta\mathbb Q\Phi_1\cdot e_m
+\Delta\mathbb Qe_\ell\cdot\Phi_2
+\Delta\mathbb Q\Phi_1\cdot\Phi_2\big)\,dx.
\end{align*}
The uniform $L^\infty$ bound for $\Delta\mathbb Q$, Cauchy--Schwarz,
and \eqref{eq:Phi_decay_bounds} yield
\begin{equation}\label{eq:remainder_detailed_bound}
|\mathcal R_{m\ell}(\xi)|
\le C\big(\|\Phi_1\|_{L^2}+\|\Phi_2\|_{L^2}
+\|\Phi_1\|_{L^2}\|\Phi_2\|_{L^2}\big)
\le C\tau^{-1}.
\end{equation}

Choose $R>0$ with $\Omega\subset B_R(0)$.  By
Lemma~\ref{lem:cgo_trace} and \eqref{eq:common_phase_sizes},
\begin{equation}\label{eq:two_trace_bounds}
\|g_j\|_{H^{1/2}(\partial\Omega)}
\le C\tau\e^{R\tau},
\qquad j=1,2.
\end{equation}
Combining Lemma~\ref{lem:alessandrini_identity},
\eqref{eq:expanded_alessandrini_integral}, and
\eqref{eq:remainder_detailed_bound} proves
\eqref{eq:fourier_bound_Q} with $A=2R$.
\end{proof}

\subsection{Proof of Lemma~\ref{lem:linearized_stability}}

\begin{proof}[Proof of Lemma~\ref{lem:linearized_stability}]
Set
\begin{equation}\label{eq:background_differences}
\wt u_\omega=u^0_{\omega,2}-u^0_{\omega,1},
\qquad
\wt u_{2\omega}=u^0_{2\omega,2}-u^0_{2\omega,1}.
\end{equation}
The structure of \eqref{eq:Qj_definition} gives
\begin{equation}\label{eq:q_entries}
q_1:=a\wt u_\omega=\tfrac12(\Delta\mathbb Q)_{21},
\qquad
q_2:=a\wt u_{2\omega}=\tfrac12(\Delta\mathbb Q)_{11}.
\end{equation}
By Lemma~\ref{lem:well_posedness}, the backgrounds have a uniform
$C^{2,\gamma}$ bound.  Since their Dirichlet values agree,
\begin{equation}\label{eq:background_h2_zero_trace}
\wt u_\omega,\wt u_{2\omega}\in H^2(\Omega)\cap H_0^1(\Omega),
\qquad
\|\wt u_\omega\|_{H^2}+\|\wt u_{2\omega}\|_{H^2}\le C.
\end{equation}
Lemma~\ref{lem:zero_extension} therefore implies, for each fixed
$0<r<3/2$,
\begin{equation}\label{eq:q_prior}
\|E_0q_k\|_{H^r(\R^3)}\le C,
\qquad k=1,2.
\end{equation}
Below $q_k$ denotes this zero extension.

Let $\tau\ge\tau_0$ and choose $1<\mu\le c_1\tau$.  Plancherel's
identity, \eqref{eq:fourier_bound_Q}, and \eqref{eq:q_prior} give
\begin{align}
(2\pi)^3\|q_k\|_{L^2(\R^3)}^2
&=\int_{|\xi|\le\mu}|\widehat q_k(\xi)|^2\,d\xi
+\int_{|\xi|>\mu}|\widehat q_k(\xi)|^2\,d\xi\nonumber\\
&\le C\mu^3E(\tau,\delta)^2
+C\mu^{-2r}\|q_k\|_{H^r(\R^3)}^2\nonumber\\
&\le C\big(\mu^3E(\tau,\delta)^2+\mu^{-2r}\big).
\label{eq:two_region_bound}
\end{align}

For $0<\delta<\delta_0$, choose
\begin{equation}\label{eq:tau_choice}
\tau=\frac{|\log\delta|}{2A}.
\end{equation}
After decreasing $\delta_0$, this value exceeds all fixed thresholds and
\begin{equation}\label{eq:boundary_error_absorbed}
\tau^2\e^{A\tau}\delta
=\tau^2\delta^{1/2}\le C\tau^{-1}.
\end{equation}
Hence $E(\tau,\delta)\le C\tau^{-1}$.  Set
\begin{equation}\label{eq:mu_choice_background}
\mu=\tau^{2/(3+2r)}.
\end{equation}
Because $2/(3+2r)<1$, the condition $\mu\le c_1\tau$ holds for all
sufficiently large $\tau$.  Substitution into
\eqref{eq:two_region_bound} yields
\begin{equation}\label{eq:final_q_bound_with_exponent}
\|q_k\|_{L^2(\Omega)}^2
\le C\tau^{-4r/(3+2r)},
\qquad k=1,2.
\end{equation}
Taking square roots, using $a\ge a_0$ in \eqref{eq:q_entries}, and then
using \eqref{eq:tau_choice} proves \eqref{eq:background_stability}.
\end{proof}

\begin{corollary}[uniqueness of the background from the linearized map]
\label{cor:background_uniqueness}
If $\Nmap_{F_1}=\Nmap_{F_2}$, then
\begin{equation}\label{eq:background_uniqueness}
u^0_{\omega,1}=u^0_{\omega,2},
\qquad
u^0_{2\omega,1}=u^0_{2\omega,2}
\quad\text{in }\Omega.
\end{equation}
\end{corollary}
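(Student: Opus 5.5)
The plan is to run the Fourier-estimate machinery of Lemma~\ref{lem:fourier_estimate} with $\delta=0$, rather than passing to a limit in the stability estimate \eqref{eq:background_stability}; that estimate is only stated for $0<\delta<\delta_0$.

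If $\Nmap_{F_1}=\Nmap_{F_2}$, the right-hand side of the Alessandrini identity \eqref{eq:Alessandrini_identity_final} vanishes for every pair of solutions $W_j$ with $\mathcal L_jW_j=0$. Fix $\xi\in\R^3$. For every $\tau\ge\max\{\tau_0,|\xi|/c_1\}$, Lemma~\ref{lem:common_null_phases} supplies admissible common null phases. The CGO solutions of Lemma~\ref{lem:matrix_cgo} then give, exactly as in the proof of Lemma~\ref{lem:fourier_estimate},
\[
\bigl|\widehat{(\Delta\mathbb Q)_{m\ell}}(\xi)\bigr|\le|\mathcal R_{m\ell}(\xi)|\le C\tau^{-1}.
\]
The boundary term $\tau^2\e^{A\tau}\delta$ is absent because $\delta=0$. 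Letting $\tau\to\infty$ yields $\widehat{(\Delta\mathbb Q)_{m\ell}}(\xi)=0$ for every $\xi$ and all $\ell,m$. Here $\Delta\mathbb Q$ is the zero extension of a bounded, compactly supported function, so it lies in $L^1\cap L^2(\R^3)$. Hence $\Delta\mathbb Q=0$ almost everywhere in $\Omega$, and by continuity everywhere in $\Omega$.

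Next, \eqref{eq:q_entries} identifies $(\Delta\mathbb Q)_{21}=2a\wt u_\omega$ and $(\Delta\mathbb Q)_{11}=2a\wt u_{2\omega}$. Since $a\ge a_0>0$, both $\wt u_\omega$ and $\wt u_{2\omega}$ vanish in $\Omega$, which is \eqref{eq:background_uniqueness}.

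There is no genuine obstacle. The only point to check is that the constants $C$ and $\tau_0$ in Lemma~\ref{lem:matrix_cgo} and in the remainder bound \eqref{eq:remainder_detailed_bound} do not depend on $\xi$, so that $\tau\to\infty$ is legitimate for each fixed $\xi$. This holds: the remainder bound uses only $\|\Phi_j\|_{L^2}\le C\tau^{-1}$ and the uniform $L^\infty$ bound on $\Delta\mathbb Q$. It also uses $|\operatorname{Re}\zeta_j|=\tau$, which holds for every $|\xi|\le c_1\tau$.
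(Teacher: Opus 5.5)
Your proposal is correct and matches the paper's proof: put $\delta=0$ in the Fourier bound \eqref{eq:fourier_bound_Q} at each fixed $\xi$, let $\tau\to\infty$ so that every entry of $\widehat{\Delta\mathbb Q}$ vanishes, and recover $\wt u_\omega,\wt u_{2\omega}$ from \eqref{eq:q_entries} using $a\ge a_0>0$. Your extra details are the checks the paper's short argument leaves implicit: Fourier injectivity on the $L^1\cap L^2$ zero extension, continuity of $\Delta\mathbb Q$, and the $\xi$-independence of $C$ and $\tau_0$. The requirement $\tau\ge1$ from \eqref{eq:null_phase_condition}, missing from your threshold, is harmless because $\tau\to\infty$.
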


\begin{proof}
When $\delta=0$, \eqref{eq:fourier_bound_Q} gives, for each fixed
$\xi\in\R^3$ and every sufficiently large $\tau$,
\[
|\widehat{(\Delta\mathbb Q)_{m\ell}}(\xi)|\le C\tau^{-1}.
\]
Letting $\tau\to\infty$ shows that every entry of
$\widehat{\Delta\mathbb Q}$ vanishes.  In particular, the two quantities
in \eqref{eq:q_entries} vanish.  Since $a\ge a_0>0$, both background
differences are zero.
\end{proof}

\section{Reconstruction of the Source Term} \label{sec:source_reconstruction}

We now convert the stability of the two background fields into stability of the source.  The fixed support set $K\Subset\Omega$ is used here to compare negative Sobolev norms before and after zero extension.

Subtracting both equations of the two background systems gives
\begin{align}
(-\Delta-\omega^2)\wt u_\omega
+2a\big(u^0_{2\omega,2}\wt u_\omega
+u^0_{\omega,1}\wt u_{2\omega}\big)
&=\wt F,\label{eq:full_difference_first}\\
(-\Delta-4\omega^2)\wt u_{2\omega}
+a\big(u^0_{\omega,2}+u^0_{\omega,1}\big)\wt u_\omega
&=0.\label{eq:full_difference_second}
\end{align}
Here $\wt F=F_2-F_1$, and the factorization in the second equation uses
$(u^0_{\omega,2})^2-(u^0_{\omega,1})^2
=(u^0_{\omega,2}+u^0_{\omega,1})\wt u_\omega$.
Equation \eqref{eq:full_difference_first} contains the unknown source and loses two derivatives when only an $L^2$ estimate of the fields is available.  The role of the $H^s$ source prior is exactly to compensate for this loss.

\begin{lemma}[interpolation across the derivative loss]\label{lem:negative_positive_interpolation}
Let $s>0$ and $G\in H^{-2}(\R^3)\cap H^s(\R^3)$.  Then
\begin{equation}\label{eq:interpolation_Hminus2_Hs}
\|G\|_{L^2(\R^3)}
\le \|G\|_{H^{-2}(\R^3)}^{s/(s+2)}
\|G\|_{H^s(\R^3)}^{2/(s+2)}.
\end{equation}
\end{lemma}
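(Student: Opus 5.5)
The plan is to work entirely on the Fourier side, using the norm convention \eqref{eq:sobolev_fourier_norm}, and apply H\"older's inequality to the weight $\langle\xi\rangle^{0}$. Since $G\in H^{-2}(\R^3)\cap H^s(\R^3)$, the function $\widehat G$ is locally integrable and both weighted integrals below are finite.

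Write $\theta=s/(s+2)$, so that $1-\theta=2/(s+2)$. The key observation is the exponent identity
\[
0=\theta\cdot(-2)+(1-\theta)\cdot s,
\]
which gives the pointwise factorization
\[
|\widehat G(\xi)|^2
=\bigl(\langle\xi\rangle^{-4}|\widehat G(\xi)|^2\bigr)^{\theta}
\bigl(\langle\xi\rangle^{2s}|\widehat G(\xi)|^2\bigr)^{1-\theta}.
\]

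The argument then proceeds in two steps.

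First, apply H\"older's inequality with the conjugate exponents $1/\theta$ and $1/(1-\theta)$ to $\int|\widehat G|^2\dd\xi$. This bounds it by
\[
\Bigl(\int\langle\xi\rangle^{-4}|\widehat G|^2\dd\xi\Bigr)^{\theta}
\Bigl(\int\langle\xi\rangle^{2s}|\widehat G|^2\dd\xi\Bigr)^{1-\theta}.
\]

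Second, multiply both sides by $(2\pi)^{-3}=\bigl((2\pi)^{-3}\bigr)^{\theta}\bigl((2\pi)^{-3}\bigr)^{1-\theta}$. This yields $\|G\|_{L^2}^2\le\|G\|_{H^{-2}}^{2\theta}\|G\|_{H^s}^{2(1-\theta)}$, and taking square roots gives \eqref{eq:interpolation_Hminus2_Hs} with constant exactly $1$. Because the Plancherel factor splits consistently between the two terms, no extra constant appears.

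There is no real obstacle here. The only point to watch is that the normalization constant is compatible with the claimed constant $1$, and it is, since $\theta+(1-\theta)=1$.
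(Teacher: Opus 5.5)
Your proposal is correct and is essentially the paper's proof: the same pointwise factorization of $|\widehat G|^2$, H\"older's inequality with exponents $(s+2)/s$ and $(s+2)/2$, and Plancherel under the convention \eqref{eq:sobolev_fourier_norm}. Your check that the factor $(2\pi)^{-3}$ splits as $\bigl((2\pi)^{-3}\bigr)^{\theta}\bigl((2\pi)^{-3}\bigr)^{1-\theta}$ correctly confirms that the constant is exactly $1$.
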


\begin{proof}
Using \eqref{eq:sobolev_fourier_norm}, write pointwise
\[
|\widehat G|^2
=\big(\langle\xi\rangle^{-4}|\widehat G|^2\big)^{s/(s+2)}
\big(\langle\xi\rangle^{2s}|\widehat G|^2\big)^{2/(s+2)}.
\]
H\"older's inequality with conjugate exponents $(s+2)/s$ and $(s+2)/2$, followed by Plancherel's identity, proves \eqref{eq:interpolation_Hminus2_Hs}.  Equivalently, one may split at a radius $\mu$ and balance the bounds $\mu^4\|G\|_{H^{-2}}^2$ and $\mu^{-2s}\|G\|_{H^s}^2$.
\end{proof}

\begin{proposition}\label{prop:source_stability}
Let $F_1,F_2\in\mathcal A_{M,s}(K)$, and let $U_j^0=(u^0_{\omega,j},u^0_{2\omega,j})$ be their small background solutions at the common boundary value $f^0$.  Put
\[
\widetilde F=F_2-F_1,
\qquad
\varepsilon=\|u^0_{\omega,2}-u^0_{\omega,1}\|_{L^2(\Omega)}
+\|u^0_{2\omega,2}-u^0_{2\omega,1}\|_{L^2(\Omega)}.
\]
If $0\le\varepsilon\le1$, then
\begin{equation}\label{eq:conditional_source_estimate}
\|\widetilde F\|_{L^2(\Omega)}\le C\varepsilon^{s/(s+2)}.
\end{equation}
The constant is uniform over $F_1,F_2\in\mathcal A_{M,s}(K)$.
\end{proposition}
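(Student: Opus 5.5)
The plan is to read $\widetilde F$ off the first difference equation \eqref{eq:full_difference_first}, to estimate it in $H^{-2}$ by $\varepsilon$, and then to recover the $L^2$ norm with Lemma~\ref{lem:negative_positive_interpolation} and the a priori $H^s$ bound.

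\textbf{Step 1: an $H^{-2}(\Omega)$ bound.} Take $\psi\in H_0^2(\Omega)$, multiply \eqref{eq:full_difference_first} by $\psi$, and integrate by parts twice. Since $\psi$ and $\partial_\nu\psi$ vanish on $\partial\Omega$, no boundary terms appear, and we obtain
\[
\ip{\widetilde F}{\psi}_\Omega=\int_\Omega \wt u_\omega\,(-\Delta-\omega^2)\psi\dd x
+\int_\Omega 2a\big(u^0_{2\omega,2}\wt u_\omega+u^0_{\omega,1}\wt u_{2\omega}\big)\psi\dd x .
\]
The backgrounds satisfy the uniform $C^{2,\gamma}$ bound \eqref{eq:apriori_est}, and $a$ is bounded. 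Cauchy--Schwarz therefore gives
\[
|\ip{\widetilde F}{\psi}_\Omega|\le C\big(\|\wt u_\omega\|_{L^2}+\|\wt u_{2\omega}\|_{L^2}\big)\|\psi\|_{H^2}=C\varepsilon\|\psi\|_{H^2}.
\]
Hence $\|\widetilde F\|_{H^{-2}(\Omega)}\le C\varepsilon$. Note that only the first difference equation is needed, and the fields are used only in $L^2$.

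\textbf{Step 2: localization.} Since $\supp\widetilde F\subset K\Subset\Omega$, Lemma~\ref{lem:negative_extension} gives $\|\widetilde F\|_{H^{-2}(\R^3)}\le C_{K,\Omega}\varepsilon$, where $\widetilde F$ is identified with its zero extension. This zero extension is the same function that appears in the definition of $\mathcal A_{M,s}(K)$, so $\|\widetilde F\|_{H^s(\R^3)}\le 2M$.

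\textbf{Step 3: interpolation.} Lemma~\ref{lem:negative_positive_interpolation} yields
\[
\|\widetilde F\|_{L^2(\Omega)}=\|\widetilde F\|_{L^2(\R^3)}\le (C\varepsilon)^{s/(s+2)}(2M)^{2/(s+2)}\le C\varepsilon^{s/(s+2)}.
\]
The constant depends only on $K,\Omega,\omega,\|a\|_{C^{2,\gamma}}$, the uniform background bound and $M$, so it is uniform over the admissible class. The assumption $\varepsilon\le1$ is not actually needed; it only keeps the statement in the regime relevant for the theorem.

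\textbf{Main obstacle.} The only delicate point is Step~1, where the duality pairing must be justified rigorously. Here $\widetilde F\in C^\gamma$ and $\wt u_\omega\in C^{2,\gamma}$, so the equation holds classically and Green's formula applies to $\psi\in C_c^\infty(\Omega)$. The estimate then extends to $H_0^2(\Omega)$ by density, which is consistent with the convention $H^{-2}(\Omega)=(H_0^2)^*$ fixed earlier.
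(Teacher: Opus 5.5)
Your proposal is correct and follows the paper's proof. The steps match: the $H_0^2(\Omega)$-duality bound $\|\widetilde F\|_{H^{-2}(\Omega)}\le C\varepsilon$ from the fundamental-frequency difference equation, then the cutoff localization to $H^{-2}(\R^3)$ via $\supp\widetilde F\subset K$, then interpolation against $\|\widetilde F\|_{H^s(\R^3)}\le 2M$. The only difference is that the paper writes the last step as an explicit frequency split with $\mu=\varepsilon^{-1/(s+2)}$, which is where $\varepsilon\le 1$ enters; you apply Lemma~\ref{lem:negative_positive_interpolation} directly, which the paper notes is equivalent and which, as you observe, removes the need for $\varepsilon\le 1$.
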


\begin{proof}
Write
\[
\widetilde u_\omega=u^0_{\omega,2}-u^0_{\omega,1},
\qquad
\widetilde u_{2\omega}=u^0_{2\omega,2}-u^0_{2\omega,1}.
\]
Subtracting the two fundamental-frequency equations gives
\begin{equation}\label{eq:fundamental_error_pde}
(-\Delta-\omega^2)\widetilde u_\omega
+2a\big(u^0_{2\omega,2}\widetilde u_\omega
+u^0_{\omega,1}\widetilde u_{2\omega}\big)=\widetilde F.
\end{equation}
The two solutions have the same Dirichlet trace, so $\widetilde u_\omega|_{\partial\Omega}=0$.  For $\phi\in H_0^2(\Omega)$, Green's formula in \eqref{eq:fundamental_error_pde}, the uniform $L^\infty$ bounds supplied by Lemma~\ref{lem:well_posedness}, and Cauchy--Schwarz yield
\begin{align*}
|\langle\widetilde F,\phi\rangle|
&\le \|\widetilde u_\omega\|_{L^2}
\|(-\Delta-\omega^2)\phi\|_{L^2}\\
&\quad+2\|a\|_{L^\infty}
\big(\|u^0_{2\omega,2}\|_{L^\infty}\|\widetilde u_\omega\|_{L^2}
+\|u^0_{\omega,1}\|_{L^\infty}\|\widetilde u_{2\omega}\|_{L^2}\big)
\|\phi\|_{L^2}\\
&\le C\varepsilon\|\phi\|_{H^2(\Omega)}.
\end{align*}
Consequently,
\begin{equation}\label{eq:F_negative_domain}
\|\widetilde F\|_{H^{-2}(\Omega)}\le C\varepsilon.
\end{equation}

Choose once and for all $\chi\in C_c^\infty(\Omega)$ with $\chi=1$ on a neighborhood of $K$.  Since $\supp\widetilde F\subset K$, testing the zero extension of $\widetilde F$ against $\psi\in H^2(\R^3)$ and using $\chi\psi\in H_0^2(\Omega)$ gives
\begin{equation}\label{eq:F_negative_fullspace}
\|\widetilde F\|_{H^{-2}(\R^3)}
\le C_{K,\Omega}\|\widetilde F\|_{H^{-2}(\Omega)}
\le C\varepsilon.
\end{equation}
Moreover, the definition of the admissible class gives
\begin{equation}\label{eq:F_positive_prior}
\|\widetilde F\|_{H^s(\R^3)}\le2M.
\end{equation}

For completeness, the frequency-splitting version of Lemma~\ref{lem:negative_positive_interpolation} reads as follows.  For any $\mu>1$, Plancherel's theorem and \eqref{eq:F_negative_fullspace}--\eqref{eq:F_positive_prior} imply
\begin{align}
(2\pi)^3\|\widetilde F\|_{L^2(\Omega)}^2
&=\int_{|\xi|\le\mu}|\widehat{\widetilde F}(\xi)|^2\,d\xi
+\int_{|\xi|>\mu}|\widehat{\widetilde F}(\xi)|^2\,d\xi\nonumber\\
&\le (1+\mu^2)^2\|\widetilde F\|_{H^{-2}(\R^3)}^2
+(1+\mu^2)^{-s}\|\widetilde F\|_{H^s(\R^3)}^2\nonumber\\
&\le C\big(\mu^4\varepsilon^2+M^2\mu^{-2s}\big).
\label{eq:source_frequency_split}
\end{align}
If $\varepsilon>0$, taking $\mu=\varepsilon^{-1/(s+2)}$ proves \eqref{eq:conditional_source_estimate}; the factor involving the fixed $M$ is absorbed into $C$.  If $\varepsilon=0$, equation \eqref{eq:F_negative_fullspace} gives $\widetilde F=0$ as a distribution, so the conclusion is immediate.  For $\varepsilon$ bounded away from zero, the same estimate follows after increasing $C$.
\end{proof}

\begin{remark}[role of the second difference equation]\label{rem:second_equation_consistency}
Equation \eqref{eq:full_difference_second} is an important consistency relation between the two recovered background fields.  It contains no occurrence of $\wt F$, however, and therefore does not remove the two-derivative loss in \eqref{eq:full_difference_first} without additional internal regularity information.  The conditional interpolation exponent $s/(s+2)$ is consequently the natural one for the argument used here.
\end{remark}

\begin{proof}[Proof of Theorem~\ref{thm:main_theorem}]
Lemma~\ref{lem:linearized_stability} gives
\[
\varepsilon\le C|\log\delta|^{-\sigma_r},
\qquad
\sigma_r=\frac{2r}{3+2r}.
\]
For sufficiently small $\delta$, Proposition~\ref{prop:source_stability} therefore yields
\[
\|F_2-F_1\|_{L^2(\Omega)}
\le C|\log\delta|^{-s\sigma_r/(s+2)}.
\]
This is the stated estimate with
$\kappa=\frac{s}{s+2}\frac{2r}{3+2r}$.
\end{proof}

\begin{corollary}[uniqueness of the source]\label{cor:source_uniqueness}
For $F_1,F_2\in\mathcal A_{M,s}(K)$,
\begin{equation}\label{eq:source_uniqueness_data}
D\Lambda_{a,F_1}(f^0)=D\Lambda_{a,F_2}(f^0)
\end{equation}
implies $F_1=F_2$ in $\Omega$.
\end{corollary}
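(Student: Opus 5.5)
The plan is to chain the two uniqueness-type consequences already established, rather than pass through the stability modulus with $\delta>0$. This matters because Theorem~\ref{thm:main_theorem} requires $0<\delta<\delta_0$, so the case $\delta=0$ has to be handled directly. By Definition~\ref{def:linearized_DtN}, the hypothesis $D\Lambda_{a,F_1}(f^0)=D\Lambda_{a,F_2}(f^0)$ says exactly that $\Nmap_{F_1}=\Nmap_{F_2}$ as operators on smooth real data. By Lemma~\ref{lem:linearized_dirichlet}(2) these operators agree on a dense subspace with their bounded extensions $(H^{1/2})^2\to(H^{-1/2})^2$. Hence the extensions coincide, and so do their complexifications, which gives $\delta=0$ in the bilinear Alessandrini identity of Lemma~\ref{lem:alessandrini_identity}.

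The first step is then to invoke Corollary~\ref{cor:background_uniqueness}. With $\delta=0$, Lemma~\ref{lem:fourier_estimate} gives $|\widehat{(\Delta\mathbb Q)_{m\ell}}(\xi)|\le C\tau^{-1}$ for every fixed $\xi$ and every $\tau\ge\max(\tau_0,|\xi|/c_1)$. Letting $\tau\to\infty$ kills all Fourier coefficients. Since $a\ge a_0>0$, we obtain $u^0_{\omega,1}=u^0_{\omega,2}$ and $u^0_{2\omega,1}=u^0_{2\omega,2}$ in $\Omega$. In the notation of Proposition~\ref{prop:source_stability} this means $\varepsilon=0$.

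The second step is to apply the first difference equation \eqref{eq:full_difference_first} directly. With $\wt u_\omega=\wt u_{2\omega}=0$, the left-hand side vanishes identically, so $\wt F=F_2-F_1=0$ as a distribution in $\Omega$. Since both sources lie in $C^\gamma$, the equality also holds pointwise. Equivalently, one may cite the $\varepsilon=0$ case of Proposition~\ref{prop:source_stability} through \eqref{eq:F_negative_domain}, which gives $\|\wt F\|_{H^{-2}(\Omega)}=0$. This step is elementary because the background fields are classical $C^{2,\gamma}$ solutions, so the equations hold pointwise.

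I expect no step here to be a real obstacle. The only point needing care is the first one. We must justify that equality of the classical derivatives on $C^{2,\gamma}$ data implies equality of the Sobolev-extended and complexified operators used to test against CGO traces. This follows from density of smooth data in $H^{1/2}(\partial\Omega)$ and uniqueness of the bounded extension and of the complexification, as recorded in Lemma~\ref{lem:linearized_dirichlet}. We also need the backgrounds to be taken at the common state $f^0$, as stressed in Remark~\ref{rem:common_linearization_state}.
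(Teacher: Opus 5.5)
Your proposal is correct and follows essentially the same route as the paper: Corollary~\ref{cor:background_uniqueness} gives $\wt u_\omega=\wt u_{2\omega}=0$, and substituting this into \eqref{eq:full_difference_first} gives $\wt F=0$. Your extra density and complexification step, which shows that equality of the classical derivatives forces $\delta=0$ for the extended complexified operators used with CGO traces, is a valid detail that the paper leaves implicit.
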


\begin{proof}
Corollary~\ref{cor:background_uniqueness} gives
$\wt u_\omega=\wt u_{2\omega}=0$.  Substitution into
\eqref{eq:full_difference_first} then gives $\wt F=0$.
\end{proof}

\begin{remark}\label{rem:optimality}
Mandache's exponential-instability result \cite{Mandache2001} explains why logarithmic moduli are natural in fixed-frequency elliptic inverse problems.  It does not, however, prove optimality for the present coupled inverse-source model or for the exponent $\kappa$ above.  Establishing a matching lower bound would require a separate instability construction.
\end{remark}

\section{Concluding Remarks and Scope of the Result}

The proof gives a direct route from one complete linearized boundary
operator to the internal source.  The first variation contains the two
background fields in entries of a symmetric matrix potential.  Once the
known diagonal Helmholtz energies are absorbed into that potential, one
common family of null CGO phases recovers its Fourier transform on a ball
of radius proportional to the CGO parameter.  A two-region frequency
split gives logarithmic stability for the background fields.  The
fundamental-frequency equation then controls the source in $H^{-2}$, and
the assumed $H^s$ bound compensates for the resulting two-derivative loss.

Several qualifications are intrinsic to the statement proved here.  The nonlinear forward problem is treated on its small real branch; the datum is the complete first derivative at one common state, rather than finitely many directional measurements; the susceptibility is known and bounded away from zero; and the source belongs to a fixed compactly supported regularity class.  In addition, the two-frequency PDE is understood as the Fourier--Galerkin model described in \eqref{eq:quadratic_mode_projection}.  These restrictions are used at identifiable points of the proof and are not hidden in generic constants.

Natural extensions include complex-valued backgrounds, partial boundary
data, and models retaining additional harmonics.  The complex case would
lead to a different real-linear or nonsymmetric linearization, while
partial data would require boundary Carleman estimates.  Additional
harmonics enlarge the matrix potential but do not by themselves change
the common null phase geometry after the known diagonal energies are
absorbed.  None of these extensions is asserted by the present theorem.

\appendix
\section{Proof of the Quantitative Implicit Function Theorem} \label{app:qift_proof}

For completeness, we prove Lemma~\ref{lem:qift} by a contraction whose
constant is uniform in the parameter.

\begin{proof}[Proof of Lemma \ref{lem:qift}]
For each fixed parameter $x\in\overline B_r^X(x_0)$, define
$\mathcal G_x:\overline B_\delta^Y(y_0)\to Y$ by
\begin{equation} \label{eq:app_gx}
\mathcal{G}_x(y) = y - [D_2\mathcal{F}(x_0,y_0)]^{-1}\mathcal{F}(x,y).
\end{equation}
Finding a solution to $\mathcal F(x,y)=0$ is equivalent to finding a
fixed point of $\mathcal G_x$ in $\overline B_\delta^Y(y_0)$.

We first show that $\mathcal G_x$ maps the closed ball into itself.
Let $y\in\overline B_\delta^Y(y_0)$.  By adding and subtracting terms,
\begin{align*}
\mathcal{G}_x(y) - y_0 &= y - y_0 - [D_2\mathcal{F}(x_0,y_0)]^{-1}\mathcal{F}(x,y) \\
&= -[D_2\mathcal{F}(x_0,y_0)]^{-1}\mathcal{F}(x,y_0) + (y - y_0) - [D_2\mathcal{F}(x_0,y_0)]^{-1}[\mathcal{F}(x,y) - \mathcal{F}(x,y_0)].
\end{align*}
Applying the fundamental theorem of calculus to the last term gives
\[
\mathcal{F}(x,y) - \mathcal{F}(x,y_0) = \int_{0}^{1} D_2\mathcal{F}(x, y_0 + t(y-y_0))[y-y_0]\,dt.
\]
Substitution gives the identity
\begin{align*}
\mathcal{G}_x(y) - y_0 &= -[D_2\mathcal{F}(x_0,y_0)]^{-1}\mathcal{F}(x,y_0) \\
&\quad + \int_{0}^{1} \left(I - [D_2\mathcal{F}(x_0,y_0)]^{-1}D_2\mathcal{F}(x, y_0 + t(y-y_0))\right)[y-y_0]\,dt.
\end{align*}
Taking norms and noting that
$y_0+t(y-y_0)\in\overline B_\delta^Y(y_0)$ for $0\le t\le1$, we obtain
\begin{align*}
\|\mathcal{G}_x(y) - y_0\|_Y &\le \|[D_2\mathcal{F}(x_0,y_0)]^{-1}\mathcal{F}(x,y_0)\|_Y \\
&\quad + \int_{0}^{1} \|[D_2\mathcal{F}(x_0,y_0)]^{-1}D_2\mathcal{F}(x, y_0 + t(y-y_0)) - I\|_{\mathcal{L}(Y,Y)} \|y - y_0\|_Y\,dt.
\end{align*}
Using \eqref{eq:qift_cond1}--\eqref{eq:qift_cond2}, we obtain
\[
\|\mathcal{G}_x(y) - y_0\|_Y \le \frac{1}{2}\delta + \int_{0}^{1} \frac{1}{2} \cdot \delta\,dt = \frac{1}{2}\delta + \frac{1}{2}\delta = \delta.
\]
Thus $\mathcal G_x(y)\in\overline B_\delta^Y(y_0)$.

Next, let $y_1,y_2\in\overline B_\delta^Y(y_0)$.  From
\eqref{eq:app_gx},
\[
\mathcal{G}_x(y_1) - \mathcal{G}_x(y_2) = (y_1 - y_2) - [D_2\mathcal{F}(x_0,y_0)]^{-1}[\mathcal{F}(x,y_1) - \mathcal{F}(x,y_2)].
\]
The fundamental theorem of calculus also gives
\[
\mathcal{G}_x(y_1) - \mathcal{G}_x(y_2) = \int_{0}^{1} \left(I - [D_2\mathcal{F}(x_0,y_0)]^{-1}D_2\mathcal{F}(x, y_2 + t(y_1-y_2))\right)[y_1-y_2]\,dt.
\]
Since the segment joining $y_1$ and $y_2$ lies in the closed ball,
condition \eqref{eq:qift_cond2} implies
\begin{align*}
\|\mathcal{G}_x(y_1) - \mathcal{G}_x(y_2)\|_Y &\le \int_{0}^{1} \|I - [D_2\mathcal{F}(x_0,y_0)]^{-1}D_2\mathcal{F}(x, y_2 + t(y_1-y_2))\|_{\mathcal{L}(Y,Y)} \|y_1 - y_2\|_Y\,dt \\
&\le \int_{0}^{1} \frac{1}{2} \|y_1 - y_2\|_Y\,dt = \frac{1}{2} \|y_1 - y_2\|_Y.
\end{align*}
The contraction constant $1/2$ is independent of
$x\in\overline B_r^X(x_0)$.

The Banach fixed point theorem therefore gives a unique
$u(x)\in\overline B_\delta^Y(y_0)$ for every
$x\in\overline B_r^X(x_0)$.  Uniform contraction and continuity of
$\mathcal G_x$ in $x$ imply continuity of $u$ on the closed ball.  At
$x=x_0$, uniqueness gives $u(x_0)=y_0$.  Finally,
\eqref{eq:qift_cond2} implies that $D_2\mathcal F(x,u(x))$ is invertible
by a Neumann series.  The usual local implicit function theorem at each
$x\in B_r^X(x_0)$ then shows that $u$ is $C^m$ on the open ball.
\end{proof}

\section{Order of the Uniform Parameter Choices}\label{app:constant_ledger}

We record the order in which the small and large parameters are fixed.
This prevents a circular dependence between the direct-problem radius,
the CGO threshold, and the final data threshold.

\begin{enumerate}
\item Fix the geometric and physical data
$\Omega,K,\omega,\gamma,a_0,d_1,d_2$, and an upper bound for
$\|a\|_{C^{2,\gamma}}$.  The gaps $d_1,d_2$ determine the free Schauder
constant $C_0$ in \eqref{eq:inverse_bound_final} and the weak resolvent
constant $C_D$ in \eqref{eq:free_resolvent_constant}.

\item Choose the local solution radius $\rho_*$ so that
$C_0C_A\|a\|_{C^\gamma}\rho_*\le1/2$, and then choose the open
well-posedness radius $\varepsilon_{\mathrm{wp}}$.  Finally choose
$0<\varepsilon_*\le\varepsilon_{\mathrm{wp}}/2$ so that the weak
linearized invertibility condition
\eqref{eq:linearized_neumann_smallness} holds.  These choices are
independent of the particular source in $\mathcal A_{M,s}(K)$.

\item The resulting uniform bound for the absorbed potentials
$\widetilde{\mathbb V}_j$ determines the CGO threshold $\tau_0$ in
\eqref{eq:null_cgo_contraction}.  Fix once and for all a number
$0<c_1<2$ for Lemma~\ref{lem:common_null_phases}, and set
\begin{equation}\label{eq:combined_tau_threshold}
\tau_*=\max\{1,\tau_0\}.
\end{equation}

\item Fix $R$ with $\Omega\subset B_R(0)$ and put $A=2R$.
Finally choose $0<\delta_0<1$ so small that, for every
$0<\delta<\delta_0$, the value $\tau=|\log\delta|/(2A)$ is at least
$\tau_*$, estimate \eqref{eq:boundary_error_absorbed} holds, and
\[
1<\mu:=\tau^{2/(3+2r)}\le c_1\tau.
\]
\end{enumerate}

Every constant in the final stability estimate is therefore determined before $F_1,F_2$ and $\delta$ vary within the admissible class.  This verifies the uniformity assertion in Theorem~\ref{thm:main_theorem}.

\normalsize
\end{document}